\title{Overdrawing Urns using Categories of Signed Probabilities}
\author{Bart Jacobs and Dario Stein
\institute{iHub, Radboud University Nijmegen}
\institute{\today} 
\email{bart.jacobs@ru.nl \qquad dario.stein@ru.nl}
}
\newcommand{\titlerunning}{Overdrawing Urns with Signed Probabilities}
\newcommand{\authorrunning}{B. Jacobs \& D. Stein}
\newtheorem{theorem}{\textbf{Theorem}}
\newtheorem{proposition}[theorem]{\textbf{Proposition}}
\newtheorem{corollary}[theorem]{\textbf{Corollary}}
\newtheorem{definition}[theorem]{\textbf{Definition}}
\newtheorem{example}[theorem]{Example}
\newenvironment{proof}[1][{\bf Proof}]%
{ \begin{trivlist}%
		\item[\hskip \labelsep {\bfseries #1}]%
	}%
	{ \end{trivlist}%
}
\newcommand*{\fatten}[1][.4pt]{%
	\textpdfrender{
		TextRenderingMode=FillStroke,
		LineWidth={\dimexpr(#1)\relax},
	}%
}
\DeclareMathAlphabet{\mathsl}{\encodingdefault}{\rmdefault}{\mddefault}{\sldefault}
\SetMathAlphabet{\mathsl}{bold}{\encodingdefault}{\rmdefault}{\bfdefault}{\sldefault}
\newcommand{\mathoverlap}[2]{\mathpalette\mathoverlap@{{#1}{#2}}}
\newcommand{\mathoverlap@}[2]{\mathoverlap@@{#1}#2}
\newcommand{\mathoverlap@@}[3]{\ooalign{$\m@th#1#2$\crcr\hidewidth$\m@th#1#3$\hidewidth}}
\newcommand{\QEDbox}{\textcolor{darkgray}{\ensuremath{\blacktriangleleft}}}
\newcommand{\QED}{\hspace*{\fill}$\QEDbox$}
\newcommand{\setin}[3]{\{#1\in#2\;|\;#3\}}
\newcommand{\bigsetin}[3]{\big\{#1\in#2\;\big|\;#3\big\}}
\newcommand{\idmap}[1][]{\ensuremath{\mathsl{id}_{#1}}}
\newcommand{\klafter}{\mathbin{\mathoverlap{\circ}{\cdot}}}
\newcommand{\NNO}{\mathbb{N}}
\newcommand{\R}{\mathbb{R}}
\newcommand{\nnR}{\mathbb{R}_{\geq 0}}
\newcommand{\shortplus}{\ensuremath{{\kern-2pt}+{\kern-2pt}}}
\newcommand{\shortminus}{\ensuremath{{\kern-1.5pt}-{\kern-1.5pt}}}
\newcommand{\finset}[1]{\ensuremath{\boldsymbol{#1}}}
\newcommand{\supp}{\mathsl{supp}}
\newcommand{\flrn}{\ensuremath{\mathsl{flrn}}}
\newcommand{\sample}{\ensuremath{\mathsl{sam}}}
\newcommand{\drawdelete}{\ensuremath{\mathsl{DD}}}
\newcommand{\one}{\ensuremath{\mathbf{1}}}
\newcommand{\zero}{\ensuremath{\mathbf{0}}}
\newcommand{\binomial}[1][]{\ensuremath{\mathsl{bn}[#1]}}
\newcommand{\binomialvector}[1]{\ensuremath{\frak{b}_{#1}}}
\newcommand{\multinomial}[1][]{\ensuremath{\mathsl{mn}[#1]}}
\newcommand{\dualbasename}{\mathsl{dmn}}
\newcommand{\dualbase}[1][]{\dualbasename[#1]}
\newcommand{\multinomialvector}[1]{\ensuremath{\frak{m}_{#1}}}
\newcommand{\dualbasevector}[1]{\ensuremath{\frak{d}_{#1}}}
\newcommand{\hypergeometric}[1][]{\ensuremath{\mathsl{hg}[#1]}}
\newcommand{\sgnhypergeometricname}{\mathsl{shg}}
\newcommand{\sgnhypergeometric}[1][]{\sgnhypergeometricname[#1]}
\newcommand{\bisgnhypergeometric}[1][]{\ensuremath{\mathsl{bshg}[#1]}}
\newcommand{\polya}[1][]{\ensuremath{\mathsl{pol}[#1]}}
\newcommand{\dirden}{\ensuremath{\mathsl{dir}}}
\newcommand{\dirdst}{\ensuremath{\mathsl{Dir}}}
\newcommand{\dualdirdst}{\ensuremath{\mathsl{DDir}}}
\newcommand{\uniform}[1][]{\ensuremath{\mathsl{uf}_{{\kern-.3ex}#1}\xspace}}
\newcommand{\Dst}{\mathcal{D}}
\newcommand{\fullDst}{\ensuremath{\mathcal{D}_{{\kern-0.4pt}\mathsl{fs}}}}
\newcommand{\Mlt}{\mathcal{M}}
\newcommand{\fullMlt}{\ensuremath{\Mlt_{{\kern-0.2pt}\mathsl{fs}}}}
\newcommand{\Giry}{\mathcal{G}}
\newcommand{\Sgn}{\mathcal{S}}
\newcommand{\HP}[2]{\ensuremath{\mathsl{P}_{#2}(\simplex{#1})}}
\newcommand{\intd}{{\kern.2em}\mathrm{d}{\kern.03em}}
\newcommand{\simplex}[1]{\Delta^{#1}}
\newcommand{\Kl}{\mathcal{K}{\kern-.4ex}\ell}
\newcommand{\tuple}[1]{\langle#1\rangle}
\newcommand{\ket}[1]{\ensuremath{|{\kern.1em}#1{\kern.1em}\rangle}}
\newcommand{\bigket}[1]{\ensuremath{\big|{\kern.1em}#1{\kern.1em}\big\rangle}}
\newcommand{\ketstrut}{\vrule height 10pt depth 5pt width 0pt}
\newcommand{\Bigket}[1]{\ensuremath{\left|\left.\ketstrut{\kern.1em}#1{\kern.005em}\right>\right.}}
\newcommand{\facto}[1]{\ensuremath{#1{\kern-2.5pt}\raisebox{-2.5pt}{\includegraphics[width=0.9em]{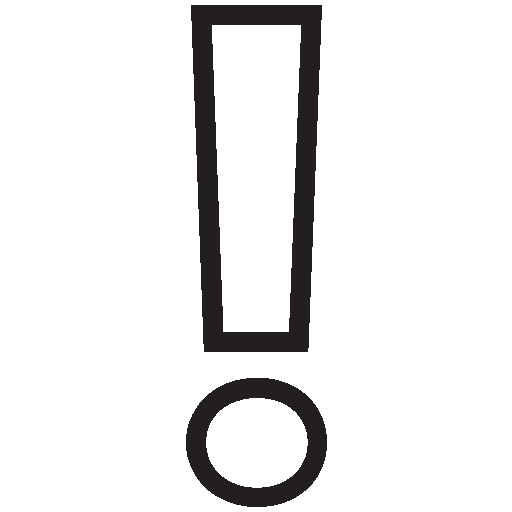}}}}
\newcommand{\coefm}[1]{\ensuremath{\fatten[0.6pt]{(}{\kern1pt}#1{\kern1pt}\fatten[0.6pt]{)}}}
\newcommand{\inprod}[2]{\ensuremath{\fatten[0.6pt]{\langle}{\kern1pt}#1,#2{\kern1pt}\fatten[0.6pt]{\rangle}}}
\newcommand{\bibinom}[2]{\left({\kern-.2em}\binom{#1}{#2}{\kern-.2em}\right)}
\newcommand{\setsize}[1]{|{\kern.1em}#1{\kern.1em}|}
\newcommand{\bigsetsize}[1]{\big|{\kern.1em}#1{\kern.1em}\big|}
\newcommand{\Bigsetsize}[1]{\Big|{\kern.1em}#1{\kern.1em}\Big|}
\newcommand{\push}{\mathrel{\mathchoice%
		{\scalebox{-0.5}[1]{$=\ll$}}
		{\scalebox{-0.5}[1]{$={\kern-1.5ex}\ll$}}
		{\scalebox{-0.5}[1]{${\kern.5ex}\scriptstyle={\kern-0.2ex}\ll{\kern.5ex}$}}
		{\scalebox{-0.5}[1]{$\scriptscriptstyle=\ll$}}}}
\newcommand{\bind}[2]{#1 \push #2}
\renewcommand{\vec}[1]{\boldsymbol{#1}}
\tikzstyle{white dot}=[inner sep=0mm, minimum size=1.5mm, draw=black, shape=circle, text depth=-0.2mm, draw=black, fill=white, tikzit category=nodes]
\tikzstyle{black dot}=[inner sep=0mm, minimum size=1.5mm, draw=black, shape=circle, draw=black, fill=black, tikzit category=nodes]
\tikzstyle{observed}=[inner sep=0mm, minimum size=5mm, draw=black, shape=circle, text depth=-0.2mm, draw=white, tikzit draw=gray, fill=white, tikzit category=dag]
\tikzstyle{latent}=[inner sep=0mm, minimum size=5mm, draw=black, shape=circle, text depth=-0.2mm, draw=black, fill=white, tikzit category=dag]
\tikzstyle{small box}=[shape=rectangle, text height=1.5ex, text depth=0.25ex, yshift=0.5mm, fill=white, draw=black, minimum height=6mm, yshift=-0.5mm, minimum width=6mm, font={\small}, tikzit category=boxes]
\tikzstyle{medium box}=[shape=rectangle, draw=black, fill=white, small box, minimum width=8mm, tikzit category=boxes]
\tikzstyle{semilarge box}=[shape=rectangle, draw=black, fill=white, small box, minimum width=12.5mm, tikzit category=boxes]
\tikzstyle{large box}=[shape=rectangle, draw=black, fill=white, small box, minimum width=15mm, tikzit category=boxes]
\tikzstyle{upground}=[circuit ee IEC, thick, ground, rotate=90, scale=1.5, inner sep=-2mm, tikzit shape=circle, tikzit fill=blue, tikzit category=points]
\tikzstyle{downground}=[circuit ee IEC, thick, ground, rotate=-90, scale=1.5, inner sep=-2mm, tikzit shape=circle, tikzit fill=green, tikzit category=points]
\tikzstyle{point}=[regular polygon, regular polygon sides=3, draw, scale=0.75, inner sep=-0.5pt, minimum width=9mm, fill=white, regular polygon rotate=180, tikzit category=points]
\tikzstyle{copoint}=[regular polygon, regular polygon sides=3, draw, scale=0.75, inner sep=-0.5pt, minimum width=9mm, fill=white, tikzit category=points]
\tikzstyle{uniform}=[point, fill=gray, tikzit shape=circle, scale=0.5]
\tikzstyle{label}=[font={\footnotesize}, text height=1.5ex, text depth=0.25ex, tikzit draw=blue, tikzit fill=white, tikzit category=labels]
\tikzstyle{left label}=[label, anchor=east, xshift=2mm, tikzit draw=green, tikzit fill=white, tikzit category=labels]
\tikzstyle{right label}=[label, anchor=west, xshift=-2mm, tikzit draw=purple, tikzit fill=white, tikzit category=labels]
\tikzstyle{disintegration}=[draw=black, fill={gray!50}, tikzit fill=gray, shape=rectangle, minimum width=1.6cm, minimum height=1.2cm, opacity=0.3]
\tikzstyle{empty diag}=[shape=rectangle, draw=darkgray, dashed, minimum width=8mm, minimum height=8mm, yshift=0.5mm]
\tikzstyle{diredge}=[->, >=latex]
\tikzstyle{dashed edge}=[-, dashed]
\newif\ifexternalizetikz
\newcommand{\tikzextname}[1]{%
	\tikzset{external/figure name={\tikzexternal@realjob-#1-}}}
\newsavebox\sbpto
\savebox\sbpto{\begin{tikzpicture}[baseline=-2.5pt]
		\filldraw[fill=white,draw=white] circle (1.4pt);
		\filldraw[fill=white,draw=black,line width=0.2pt]circle(2pt);
\end{tikzpicture}}
\newcommand\chanto{\mathrel{\ooalign{$\to$\cr
			\hfil\!$\usebox\sbpto$\hfil\cr}}}            
\newsavebox\sbground
\savebox\sbground{\begin{tikzpicture}[circuit ee IEC,yscale=1,xscale=1]
		\draw (0,-2ex) to (0,0) node[ground,rotate=90,xshift=.65ex] {};
\end{tikzpicture}}
\newsavebox\sbunif
\savebox\sbunif{\begin{tikzpicture}[circuit ee IEC,yscale=1,xscale=1]
		\draw (0,0) to (0,2ex) node[ground,rotate=270,xshift=2.5ex] {};
\end{tikzpicture}}
\newcommand{\hide}[1]{}
\begin{document}
\maketitle

\begin{abstract}
A basic experiment in probability theory is drawing without replacement from an urn filled with multiple balls of different colours. Clearly, it is physically impossible to overdraw, that is, to draw more balls from the urn than it contains. This paper demonstrates that overdrawing does make sense mathematically, once we
allow signed distributions with negative probabilities. A new
(conservative) extension of the familiar hypergeometric
(`draw-and-delete') distribution is introduced that allows draws of
arbitrary sizes, including overdraws. The underlying theory makes use
of the dual basis functions of the Bernstein polynomials, which play a prominent role in computer graphics. Negative probabilities are treated systematically in the framework of categorical probability and the central role of datastructures such as multisets and monads is emphasised.
\end{abstract}

\hide{
\begin{abstract}
A basic model in probability theory is an urn filled with multiple
balls of different colours. The probability of drawing a ball of a
certain colour is determined by the proportion of balls of that colour
in the urn. Clearly, it is physically impossible to overdraw, that is,
to draw more balls from the urn than it contains. This paper
demonstrates that overdrawing does make sense mathematically, once we
allow signed distributions with negative probabilities. A new
(conservative) extension of the familiar hypergeometric
(`draw-and-delete') distribution is introduced that allows draws of
arbitrary sizes, including overdraws. The underlying theory makes use
of the dual basis functions with respect to the Bernstein polynomials,
which play a prominent role in computer graphics. It yields an
analogue of a classical De Finetti style result: the infinite chain of
P\'olya (`draw-and-duplicate') distributions, of draws of increasing
sizes, can be obtained via multinomial distributions (of corresponding
sizes) over a single Dirichlet distribution. The paper introduces a
new (signed) dual Dirichlet distribution and shows that the chain of
hypergeometric distributions (of increasing sizes) can be obtained via
multinomials over this dual Dirichlet. Negative probabilities are
treated systematically in the framework of categorical probability and
the central role of datastructures such as multisets and monads is
emphasised.
\end{abstract}
}

\section{Introduction}
For drawing (multiple) coloured balls from a statistical urn, we
distinguish three well-known modes:
\begin{enumerate}
\item \emph{hypergeometric} or \emph{draw-and-delete}, which is
  drawing a ball from the urn without replacement, so that the urn
  shrinks;
	
\item \emph{multinomial} or \emph{draw-and-replace}: drawing with
  replacement, so that the urn remains the same;
	
\item \emph{P\'olya} or \emph{draw-and-duplicate}, which is drawing a
  ball from the urn and replacing it together with an additional ball
  of the same colour, so that the urn grows.
\end{enumerate}

\noindent Multinomial and P\'olya draws may be of arbitrary size, but
hypergeometric draws are limited in size by the number of balls in the
urn. In this paper we lift this limitation and allow hypergeometric
draws of arbitrary size, including `overdraws', containing more balls
than in the urn. Physically this is strange, but, as will show,
mathematically it makes sense once we allow negative probabilities.

Negative probabilities have emerged in quantum physics
(\textit{e.g.}~in double slit experiments) and have been discussed in
the work of famous physicists like Wigner, Dirac, and Feynman (see
\textit{e.g.}~\cite{Feynman87} and the references mentioned
there). There are also `classical' (non-quantum) examples, such as the
one of Piponi (discussed in~\cite{AbramskyB14}) or of
Sz\'ekely~\cite{Szekely05} with two half coins, involving infinitely
many both positive and negative probabilities, whose (convolution) sum
is an ordinary (fair) coin.  Also, negative probabilities have come up
in finance, see \textit{e.g.}~\cite{MeissnerB11}. Despite the lack of
the clear operational meaning that their nonnegative counterparts
have, negative probabilities appear as convenient tools in a variety
of contexts in mathematics and physics,
see~\cite{blass2021negative,tijms2007negative,AbramskyB14,Feynman87}.

We briefly explain the nature of our extension, already using some
notation that will be explained below. The P\'olya distribution can be
expressed as a mixture of multinomial draws; that is, we can break up
such a draw into two stages: first sample a random distribution
$\omega$ from the Dirichlet distribution, and then make independent
(multinomial) draws from $\omega$. The self-reinforcing behaviour of
P\'olya's urn is entirely captured by the latent Dirichlet
distribution. In the Kleisli category of the Giry-monad, with
(Kleisli) morphisms called channels, this corresponds to a
factorisation of the P\'olya channel $\polya[K]$ through the
multinomial channel $\multinomial[K]$, with draws of size $K$.
\begin{equation}
	\begin{tikzcd}
		{\polya[K] \quad = \quad \Mlt[N](X)} & {\Dst(X)} & {\Mlt[K](X)}
		\arrow["\dirdst", from=1-1, to=1-2]
		\arrow["{\multinomial[K]}", from=1-2, to=1-3]
	\end{tikzcd} \label{foo}
\end{equation}

\noindent We write $\Mlt[N](X)$ for the space of multisets (urns) on a
set $X$ of size $N$, and $\Dst(X)$ is the set of finite
distributions. Since such factorisations arise from De Finetti's
famous theorem, we call~\eqref{foo} a De Finetti factorisation for the
P\'olya's distribution (\textit{e.g.}~\cite{JacobsS20}).

The hypergeometric distribution does not admit such a De Finetti
factorisation since an urn containing $N$ balls is exhausted after $N$
draws.  However, there is a way out, if we extend our notion of
probability to allow negative (signed) probabilities. Such models
satisfy the usual axioms of categorical probability, and we can find a
De Finetti factorization of the hypergeometric channel
$\hypergeometric[K]$, for draws of size $K$, of the form:
\begin{equation}
	\begin{tikzcd}
		{\hypergeometric[K] \quad = \quad \Mlt[N](X)} & {\Dst(X)} & {\Mlt[K](X)}
		\arrow["\dualdirdst", from=1-1, to=1-2]
		\arrow["{\multinomial[K]}", from=1-2, to=1-3]
	\end{tikzcd} \label{bar}
\end{equation}

\noindent It uses a signed `Dual Dirichlet' distribution $\dualdirdst$
which we develop in analogy to the Dirichlet distribution occurring in
P\'olya's urn. Existence of the factorisation can be deduced from
earlier
work~\cite{jaynes1982some,diaconis1977finite,kerns2006definetti}
connecting finite versions of De Finetti's theorem to signed
probability. In this case, the factorisation~\eqref{bar} is not
unique. We claim that a canonical choice is given by the Dual
Bernstein polynomials, which have been studied widely in computer
graphics~\cite{lorentz2013bernstein,dong1993dual,zhao1988dual,Juttler98},
but their appearance in a probabilistic context is
novel. Evaluating~\eqref{bar} for overdraws $K \geq N$ defines a
signed extension of the hypergeometric distributions which includes
overdraws, while agreeing with the usual distribution for ordinary
draws $K \leq N$.

Our contributions are:
\begin{enumerate}
\item a principled approach to signed probability (discrete and continuous) using multisets and monads;

\item conceptualizing dual Bernstein polynomials as signed probability densities;

\item defining signed hypergeometric distributions that
  conservatively extend hypergeometric draws while preserving good
  properties;

\item explicating the dual Dirichlet distribution and its conjugate
  prior relationships via string diagrams.
\end{enumerate}

\hide{
\section{Introduction Old}

A nonnegative number can be used for the amount of money (credit) that
someone owns and can spend. A negative number may then be used for
debit, as a form of negative ownership. A chance or probability is
standardly described as a (nonnegative) number in the unit interval
$[0,1]\subseteq\R$, where $1$ stands for certainty (truth) and $0$ for
impossibility (falsum). Negative probabilities have emerged in quantum
physics (\textit{e.g.}~in double slit experiments) and have been
discussed in the work of famous physicists like Wigner, Dirac, and
Feynman (see \textit{e.g.}~\cite{Feynman87} and the references
mentioned there). There are also `classical' (non-quantum) examples,
such as the one of Piponi (discussed in~\cite{AbramskyB14}) or of
Sz\'ekely~\cite{Szekely05} with two half coins, involving infinitely
many both positive and negative probabilities, whose (convolution) sum
is an ordinary (fair) coin.  Also, negative probabilities have come up
in finance, see \textit{e.g.}~\cite{MeissnerB11}. Despite the lack of
the clear operational meaning that their nonnegative counterparts
have, negative probabilities appear as convenient tools in a variety
of contexts in mathematics and physics,
\cite{blass2021negative,tijms2007negative,AbramskyB14,Feynman87}.

Thus, it seems fair to say that these sources promote the use of
negative probabilities in modeling and calculations, without offering
an explanation for why or how negative probabilities come about. They
are useful, just like complex numbers can be useful in calculations
about real numbers. The same holds for this paper. It offers a proper
mathematical construction, via dual bases, that gives rise to
well-behaved signed distributions (with negative probabilities). These
constructions can be described in terms of drawing and overdrawing
balls from an urn, but without an explanation for the occurrence of
negative probabilities in overdraws, \textit{e.g.}~in terms of the
presence or absence of coloured balls in the urn.

There are many families of signed probability distributions which
could consistently extend the hypergeometric distribution by allowing
overdraws. We propose a particularly natural choice which we call
\emph{signed hypergeometric distributions}, written as
$\sgnhypergeometric[K]$ for draws of size $K$, like
in~\eqref{SgnDrawEx} above. This choice is motivated by the close
mathematical analogy with P\'olya's urn and with the (continuous) Beta
and Dirichlet distributions which play a central role in Bayesian
approaches to drawing. In the process, we will define a \emph{Dual
	Dirichlet} distribution $\dualdirdst$, which is itself a (continuous)
signed probability measures. A Dual Beta is then implicit, in the
bivariate case (with only two colours). Their density functions are
dual Bernstein polynomials, which have been widely studied in a
Computer Graphics
context~\cite{lorentz2013bernstein,dong1993dual,zhao1988dual,Juttler98},
but their exploitation in a probabilistic setting is novel.

Our main contribution is to exhibit a natural family of signed
distributions --- the dual Dirichlet distributions $\dualdirdst$ ---
that serve as root for a De Finetti representation of hypergeometric
draws of arbitrary sizes (including overdraws). That is, in order to
make hypergeometric draws $x_1, \ldots, x_n$ from an urn $\upsilon$
with $N$ balls, we first sample an (unsigned) distribution $\mu \sim
\dualdirdst(\upsilon)$ and then repeatedly sample $x_1, \ldots, x_n
\sim \mu$ independently. Because there is no limit on how often we can
sample $\mu$, such a De Finetti representation also explains overdraws
$n > N$ in a consistent fashion. This is what happens in
Definition~\ref{DualDirichletDef} below, where a signed hypergeometric
distribution is introduce via multinomials over dual Dirichlet.
In more detail, the paper contributes with:
\begin{itemize}
	\item a principled approach to signed distributions, using multisets,
	monads and string diagrams;
	
	\item a signed continuous dual Dirichlet distribution, obtained via a
	systematic dual basis construction;
	
	\item a signed discrete hypergeometric distribution that allows
	overdrawing of urns, defined as multinomial distribution over dual
	Dirichlet; this signed hypergeometric coincides with the ordinary
	hypergeometric as long as no overdrawing happens;
	
	\item a characterisation (or alternative definition) of the signed
	hypergeometric as a Bayesian inversion (dagger), of a combination of
	ordinary Dirichlet and a new dual basis distribution.
\end{itemize}

Our approach works in the general multivariate case, with multiple
colours (in urns) or multiple variables (in polynomials). The special
bivariate case (with two colours / variables) often gets more
attention in the literature: We cover it separately in the
appendix. In the bivariate case, explicit formulas are known for the
dual basis functions, which we can use to give an explicit formula for
the bivariate signed hypergeometric, see
Figure~\ref{BivariateSignedHypergeometricFig}. The formula is long and
complicated and does not provide an operational understanding.
Finally in Section~\ref{ConclusionSec} we draw conclusions and look at
further work.

}

\section{Multisets}\label{MultisetSec}

A multiset, also known as bag, is like a subset except that elements
may occur multiple times. We shall use ket notation $n_{1}\ket{x_1} +
\cdots + n_{k}\ket{x_{k}}$ to describe a multiset with $k$ elements,
where element $x_{i}$, say from a set $X$, occurs $n_{i}\in\NNO$ many
times. Equivalently, such a multiset may be described as a function
$\varphi\colon X \rightarrow\NNO$ with finite support $\supp(\varphi)
\coloneqq \setin{x}{X}{\varphi(x) \neq 0}$. The number
$\varphi(x)\in\NNO$ is the multiplicity of $x\in X$; it says how many
times $x$ occurs in the multiset $\varphi$.  

We shall write $\Mlt(X)$ for the set of multisets with elements from a
set $X$, and $\fullMlt(X)\subseteq \Mlt(X)$ for the subset of
multisets $\varphi$ with full support, that is, with $\supp(\varphi) =
X$. The latter only makes sense when $X$ is a finite set. As canonical
finite sets we write $\finset{n} \coloneqq \{0,1,\ldots, n-1\}$, for
$n\in\NNO$.

The size $\|\varphi\|$ of a multiset $\varphi$ is the total number of
elements, including multiplicities. Thus, $\|\varphi\| \coloneqq
\sum_{x}\varphi(x)$, or, in ket notation, $\|\sum_{i}n_{i}\ket{x_i}\|
= \sum_{i} n_{i}$. We write $\Mlt[K](X) \coloneqq
\bigsetin{\varphi}{\Mlt(X)}{\|\varphi\| = K}$ for the set of multisets
of size $K\in\NNO$. When the set $X$ has $n\geq 1$ elements, the
number of multisets of size $K$ in $\Mlt[K](X)$ is
$\big(\!\binom{n}{K}\!\big) = \binom{n+K-1}{K} =
\frac{(n+K-1)!}{K!\cdot (n-1)!}$. For instance, for a set $X =
\{a,b,c\}$ with three elements there are $\big(\!\binom{3}{3}\!\big) =
\frac{5!}{3!\cdot 2!} = 10$ multisets of size $K=3$, namely:
$3\ket{a}$, $\,3\ket{b}$, $\,3\ket{c}$, $\,2\ket{a}+1\ket{b}$,
$\,2\ket{a}+1\ket{c}$, $\,1\ket{a}+2\ket{b}$, $\,2\ket{b}+1\ket{c}$,
$\,1\ket{a}+2\ket{c}$, $\,1\ket{b}+2\ket{c}$,
$\,1\ket{a}+1\ket{b}+1\ket{c}$. Only the last one has full support.
The factorial $n!$ and binomial coefficients $\binom{n}{m}$ and
$\bibinom{n}{m}$ are extended from numbers to multisets (as
in~\cite{Jacobs21b}).

\begin{definition}
	\label{MltNumberDef}
	Let $\varphi,\psi\in\Mlt(X)$ be two multisets. We define 
	\begin{enumerate}
		\item $\facto{\varphi} \coloneqq \prod_{x} \varphi(x)!$;

		\item $\coefm{\varphi} \coloneqq \frac{\|\varphi\|!}{\facto{\varphi}}$;

		\item $\varphi\leq\psi$ iff $\varphi(x) \leq \psi(x)$ for each $x\in
		X$, and $\varphi \leq_{K} \psi$ iff $\varphi\leq\psi$ and
		$\|\varphi\| = K$;

		\item $(\psi - \varphi)(x) = \psi(x) - \varphi(x)$, when $\varphi \leq
		\psi$;
		
		\item $\binom{\psi}{\varphi} \coloneqq
		\frac{\facto{\psi}}{\facto{\varphi} \cdot \facto{(\psi-\varphi)}} =
		\prod_{x} \binom{\psi(x)}{\varphi(x)}$, when $\varphi\leq\psi$;
		
		\item $\big(\binom{\psi}{\varphi}\big) \coloneqq
		\frac{\facto{(\psi+\varphi-\one}}{\facto{\varphi} \cdot
			\facto{(\psi-\one)}} = \prod_{x}
		\big(\binom{\psi(x)}{\varphi(x)}\big)$ when $\psi$ has full support,
		where $\one = \sum_{x} 1\ket{x}$ is the multiset of singletons.
	\end{enumerate}
\end{definition}

\section{Discrete distributions}\label{DiscDstSec}

A discrete probability distribution $\sum_{i}r_{i}\ket{x_i}$ looks
like a multiset, except that the multiplicities $r_{i}$ are now in the
unit interval $[0,1] \subseteq \R$ and add up to one: $\sum_{i}r_{i} =
1$.  We write $\Dst(X)$ for the set of such distributions with
$x_{i}\in X$.  Alternatively, like for multisets, elements
$\omega\in\Dst(X)$ may be described as functions $\omega \colon X
\rightarrow [0,1]$ with finite support and with $\sum_{x} \omega(x) =
1$. When the set $X$ is finite, we write $\fullDst(X) \subseteq
\Dst(X)$ for the discrete distributions with full support:
$\supp(\omega) = X$. An example is the uniform distribution
$\sum_{x\in X} \frac{1}{n}\ket{x}$, where $n\geq 1$ is the number of
elements of a non-empty set $X$. Concretely, a fair coin is described
by the distribution $\frac{1}{2}\ket{H} + \frac{1}{2}\ket{T}$ for $X =
\{H,T\}$.

Each non-empty multiset $\varphi\in\Mlt(X)$ can be turned into
a distribution via normalisation. We call this frequentist learning,
since it involves learning a distribution by counting, and write
it as:
\begin{equation}
	\label{FlrnEqn}
	\begin{array}{rcccl}
		\flrn(\varphi) 
		& \coloneqq &
		\displaystyle\frac{\varphi}{\|\varphi\|}
		& = &
		\displaystyle\sum_{x\in X} \frac{\varphi(x)}{\|\varphi\|}\,\bigket{x}
		\;\in\; \Dst(X).
	\end{array}
\end{equation}

\noindent This frequentist learning is natural in $X$, but it is
not a map of monads, from (non-empty) multisets to distributions.

The set $\Dst(\finset{n})$ of distributions on $\finset{n} =
\{0,\ldots,n-1\}$ can be identified with the simplex $\simplex{n}
\subseteq \R^{n}$,
where:
\begin{equation}
	\label{SimplexEqn}
	\begin{array}{rcl}
		\simplex{n}
		& \coloneqq &
		\bigsetin{(r_{0}, \ldots, r_{n-1})}{\nnR}{\sum_{i} r_{i} = 1}.
	\end{array}
\end{equation}

\noindent This is commonly called the $n-1$ simplex.

There are three famous `draw' distributions, called multinomial,
hypergeometric and P\'olya. We briefly describe them in the style
of~\cite{Jacobs22a} and refer there for more information. These
distributions are all parameterised by a draw size $K$ and form
distributions on the set $\Mlt[K](X)$ of multisets (as draws). One may
think of $X$ as a set of colors.

\begin{definition}
	\label{DrawDstDef}
	We fix a set $X$ and a number $K\in\NNO$.
	\begin{enumerate}
		\item For a distribution $\omega\in\Dst(X)$, used as
                  abstract urn, the multinomial distribution
                  $\multinomial[K](\omega) \in
                  \Dst\big(\Mlt[K](X)\big)$ is defined as:
		\[ \begin{array}{rcl}
			\multinomial[K](\omega)
			& \coloneqq &
			\displaystyle\sum_{\varphi\in\Mlt[K](X)} \coefm{\varphi} \cdot 
			\prod_{x\in X} \omega(x)^{\varphi(x)} \, \bigket{\varphi}.
		\end{array} \]
		
		\item For an `urn' multiset $\upsilon\in\Mlt(X)$, with size $L
		\coloneqq \|\upsilon\| \geq K$ there is a hypergeometric
		distribution $\hypergeometric[K](\upsilon) \in
		\Dst\big(\Mlt[K](X)\big)$ with:
		\[ \begin{array}{rcl}
			\hypergeometric[K](\upsilon)
			& \coloneqq &
			\displaystyle\sum_{\varphi\leq_{K}\upsilon} \frac{\binom{\upsilon}{\varphi}}{\binom{L}{K}}
			\, \bigket{\varphi}.
		\end{array} \]
		
		\noindent The size restriction $K \leq L$ excludes overdraws.
		
		\item Similarly, for a multiset $\upsilon\in\Mlt[L](X)$ with full
		support, there is the P\'olya distribution 
		$\polya[K](\upsilon) \in \Dst\big(\Mlt[K](X)\big)$ with:
		\[ \begin{array}{rcl}
			\polya[K](\upsilon)
			& \coloneqq &
			\displaystyle\sum_{\varphi\in\Mlt[K](X)} 
			\frac{\big(\binom{\upsilon}{\varphi}\big)}{\big(\binom{L}{K}\big)}
			\, \bigket{\varphi}.
		\end{array} \]
	\end{enumerate}
\end{definition}

Intuitively, in the multinomial case drawn balls are returned to the
urn, so that the urn does not change and can be described abstractly
as a discrete distribution. In the hypergeometric case a drawn ball is
removed from the urn, and in the P\'olya case the drawn ball is
returned together with a new ball of the same colour. Thus in the
hypergeometric case the urns shrinks, whereas in the P\'olya case the
urn grows.

Given two distributions $\omega\in\Dst(X)$ and $\rho\in\Dst(Y)$ we can
form a parallel (tensor) product $\omega\otimes\rho \in \Dst(X\times
Y)$, via $(\omega\otimes\rho)(x,y) = \omega(x)\cdot\rho(y)$.

The mapping $X \mapsto \Dst(X)$ is a monad on the category of sets.
We will not spell out what this means, but we will use the resulting
Kleisli category $\Kl(\Dst)$, whose maps $c\colon X \rightarrow
\Dst(Y)$ will be called channels and written as $c \colon X \chanto
Y$. For instance, the distributions from Definition~\ref{DrawDstDef}
can be described as channels $\multinomial[K] \colon \Dst(X) \chanto
\Mlt[K](X)$, $\hypergeometric[K] \colon \Mlt[L](X) \chanto \Mlt[K](X)$
and $\polya[K] \colon \fullMlt(X) \chanto \Mlt[K](X)$.


For a channel $c\colon X \chanto Y$ and a distribution
$\omega\in\Dst(X)$ on the domain $X$ we can form a distribution
$\bind{c}{\omega}$ on the codomain $Y$ via pushforward (also called
state transformation):
\[ \begin{array}{rcl}
	\big(\bind{c}{\omega}\big)(y)
	& \coloneqq &
	\displaystyle\sum_{x\in X} \omega(x) \cdot c(x)(y).
\end{array} \]

\noindent Given another channel $d\colon Y \chanto Z$ one can form a
composite channel $d \klafter c \colon X \chanto Z$ via $(d \klafter
c)(x) \coloneqq \bind{d}{c(x)}$. Notice that we use a special circle
$\klafter$, with a dot, for composition of channels.

A basic channel is $\drawdelete \colon \Mlt[K\shortplus 1](X) \chanto
\Mlt[K](X)$, where $\drawdelete$ stands for draw-delete. It
probabilistically draws and removes one ball from an urn
$\upsilon\in\Mlt[K\shortplus 1](X)$ with $K\shortplus 1$ balls, via:
\begin{equation}
	\label{DrawDeleteEqn}
	\begin{array}{rcl}
		\drawdelete(\upsilon)
		& \coloneqq &
		\displaystyle\sum_{x\in\supp(\upsilon)} \frac{\upsilon(x)}{\|\upsilon\|}
		\,\bigket{\upsilon-1\ket{x}}.
	\end{array}
\end{equation}

We recall, without proof, the following basic properties of draw
distributions, mostly from~\cite{Jacobs21b}, expressed in terms of
channels.

\begin{proposition}
	\label{DrawDstProp}
	\begin{enumerate}
		
		
		\item \label{DrawDstPropFlrnHg} $\flrn \klafter \hypergeometric[K] =
		\flrn$;
		
		
		\item \label{DrawDstPropFlrnMn} $\flrn \klafter \multinomial[K] =
		\sample$, where $\sample \colon \Dst(X) \chanto X$ is the identity
		map, considered as channel; 
		
		
		\item \label{DrawDstPropHgMn} $\hypergeometric[K] \klafter
		\multinomial[K\shortplus L] = \multinomial[K]$;
		
		
		\item \label{DrawDstPropHgHg} $\hypergeometric[K] \klafter
		\hypergeometric[K\shortplus L] = \hypergeometric[K]$;
		
		\item \label{DrawDstPropHgDD} $\hypergeometric[K] \klafter \drawdelete
		= \hypergeometric[K]$;
		
		\item \label{DrawDstPropDDHg} $\drawdelete \klafter
		\hypergeometric[K\shortplus 1] = \hypergeometric[K]$;
		
		\item \label{DrawDstPropDDMn} $\drawdelete \klafter
		\multinomial[K\shortplus 1] = \multinomial[K]$;
		
		\item \label{DrawDstPropDDPl} $\drawdelete \klafter
		\polya[K\shortplus 1] = \polya[K]$. \QED
	\end{enumerate}
\end{proposition}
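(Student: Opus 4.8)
The plan is to reduce all eight identities to elementary manipulations with binomial and multi-choose coefficients on multisets, organised around one structural observation plus three ``one-step'' computations, after which the rest is pure composition. The observation is that $\drawdelete$ is itself a hypergeometric channel: for $\upsilon\in\Mlt[M](X)$ the coordinatewise product $\binom{\upsilon}{\upsilon-1\ket{x}} = \prod_{y}\binom{\upsilon(y)}{(\upsilon-1\ket{x})(y)}$ collapses to $\upsilon(x)$, so
\[ \drawdelete(\upsilon) \;=\; \sum_{x\in\supp(\upsilon)}\frac{\upsilon(x)}{M}\,\bigket{\upsilon-1\ket{x}} \;=\; \hypergeometric[M\shortminus 1](\upsilon). \]
Two further trivial remarks: $\hypergeometric[K]$ restricts to the identity channel on $\Mlt[K](X)$ (the only $\varphi\leq_{K}\varphi$ is $\varphi$, with $\binom{\varphi}{\varphi}=1$), and along the iso $\Mlt[1](X)\cong X$ the channel $\hypergeometric[1]$ is $\flrn$ and $\multinomial[1]$ is $\sample$.

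First I would establish the three one-step identities directly: $\drawdelete\klafter\hypergeometric[K\shortplus 1] = \hypergeometric[K]$, $\,\drawdelete\klafter\multinomial[K\shortplus 1] = \multinomial[K]$, and the target $\drawdelete\klafter\polya[K\shortplus 1] = \polya[K]$. In each case, unfold both channels, and reindex the outer sum over size-$K$ multisets $\varphi$ by writing the size-$(K\shortplus 1)$ multiset as $\varphi+1\ket{x}$; then collapse using the one-coordinate recurrences $\binom{a}{b+1} = \tfrac{a-b}{b+1}\binom{a}{b}$, $\,\coefm{\varphi+1\ket{x}} = \tfrac{\|\varphi\|+1}{\varphi(x)+1}\coefm{\varphi}$, and $\bibinom{a}{b+1} = \tfrac{a+b}{b+1}\bibinom{a}{b}$. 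The weight of $\ket{\varphi}$ then factors as its weight in the target distribution times a sum $\sum_{x}(\cdots)$ which equals $\|\upsilon\|-\|\varphi\|$, or $\sum_{x}\omega(x)=1$, or $\|\upsilon\|+\|\varphi\|$, respectively; a one-line check that the residual scalar is $1$ (using $(K\shortplus 1)\binom{N}{K\shortplus 1} = (N\shortminus K)\binom{N}{K}$ and its multi-choose analogue) closes each case. The P\'olya case is exactly the final statement; alternatively it follows from the multinomial one-step identity and the De Finetti factorisation $\polya[K] = \multinomial[K]\klafter\dirdst$ of~\eqref{foo}, via $\drawdelete\klafter\polya[K\shortplus 1] = \drawdelete\klafter\multinomial[K\shortplus 1]\klafter\dirdst = \multinomial[K]\klafter\dirdst = \polya[K]$.

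The remaining items are then assembled by composition. For $\hypergeometric[K]\klafter\hypergeometric[K\shortplus L] = \hypergeometric[K]$ I would induct on $L$: the base $L=0$ is the identity remark, and the step rewrites $\hypergeometric[K]\klafter\hypergeometric[K\shortplus L\shortplus 1]$ as $(\hypergeometric[K]\klafter\hypergeometric[K\shortplus L])\klafter\hypergeometric[K\shortplus L\shortplus 1]$ via the hypothesis, then uses that $\hypergeometric[K\shortplus L]$ on $\Mlt[K\shortplus L\shortplus 1](X)$ is $\drawdelete$, so the one-step identity gives $\hypergeometric[K\shortplus L]\klafter\hypergeometric[K\shortplus L\shortplus 1] = \hypergeometric[K\shortplus L]$, and the hypothesis is applied once more. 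Then $\hypergeometric[K]\klafter\drawdelete = \hypergeometric[K]$ is immediate because $\drawdelete$ is a hypergeometric channel, and $\drawdelete\klafter\hypergeometric[K\shortplus 1] = \hypergeometric[K]$ was already shown. The identity $\hypergeometric[K]\klafter\multinomial[K\shortplus L] = \multinomial[K]$ goes by the same induction, the step using the multinomial one-step identity to replace $\hypergeometric[K\shortplus L]\klafter\multinomial[K\shortplus L\shortplus 1]$ by $\multinomial[K\shortplus L]$ and the just-proved $\hypergeometric[K]\klafter\hypergeometric[K\shortplus L] = \hypergeometric[K]$ to finish. Finally $\flrn\klafter\hypergeometric[K] = \flrn$ and $\flrn\klafter\multinomial[K] = \sample$ follow by specialising these two identities to $K=1$ and using $\hypergeometric[1]=\flrn$, $\multinomial[1]=\sample$. (A self-contained alternative for the two mixed-size identities is a multiset Chu--Vandermonde computation: split the large multiset as $\varphi+\chi$, use $\coefm{\varphi+\chi}\binom{\varphi+\chi}{\varphi} = \binom{K+L}{K}\coefm{\varphi}\coefm{\chi}$ and $\binom{\upsilon}{\psi}\binom{\psi}{\varphi} = \binom{\upsilon}{\varphi}\binom{\upsilon-\varphi}{\psi-\varphi}$, and sum over the complementary multiset with $\sum_{\|\chi\|=L}\coefm{\chi}\prod_{x}\omega(x)^{\chi(x)} = 1$ or $\sum_{\chi\leq_{L}\xi}\binom{\xi}{\chi} = \binom{\|\xi\|}{L}$.)

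The main obstacle is bookkeeping, not ideas: one must keep the reindexed sums over the correct ranges --- respecting $\varphi+1\ket{x}\leq\upsilon$, the size conditions $\|\upsilon\|\geq K$, and full support in the P\'olya case --- and observe that the summands which ought to disappear do so automatically because the relevant coefficient is $0$ (for instance the $\varphi(x)=\upsilon(x)$ term above). I do not expect any step to require more than the per-coordinate recurrences and the (multi)nomial theorem listed above.
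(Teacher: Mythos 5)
The paper states Proposition~\ref{DrawDstProp} without proof, deferring to~\cite{Jacobs21b}, so there is no in-paper argument to compare against; your proposal therefore has to stand on its own. It does. The structural reduction is sound: the identification $\drawdelete = \hypergeometric[M\shortminus 1]$ on $\Mlt[M](X)$ is a correct two-line calculation (the coordinatewise product $\binom{\upsilon}{\upsilon-1\ket{x}}$ collapses to $\upsilon(x)$ and $\binom{M}{M-1}=M$), as are the identifications $\hypergeometric[1]\cong\flrn$ and $\multinomial[1]\cong\sample$ under $\Mlt[1](X)\cong X$ and the fact that $\hypergeometric[K]$ is the identity on $\Mlt[K](X)$. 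I checked the three one-step computations you outline; the recurrences you cite indeed close them, e.g.\ for the multinomial case the $\varphi(x)+1$ factors cancel and the residual $\sum_{x}\omega(x)=1$ gives the claim, and for the hypergeometric case the residual is $\tfrac{1}{L-K}\sum_{x}(\upsilon(x)-\varphi(x))=1$. The two inductions are correctly set up (the only point worth making explicit is that when you insert $\hypergeometric[K\shortplus L]$ between $\hypergeometric[K]$ and $\hypergeometric[K\shortplus L\shortplus 1]$, its domain is forced to be $\Mlt[K\shortplus L\shortplus 1](X)$, which is exactly where it agrees with $\drawdelete$), and the specialisation to $K'=1$ for items~\ref{DrawDstPropFlrnHg} and~\ref{DrawDstPropFlrnMn} needs only the tacit hypothesis $K\geq 1$, which is harmless since $\flrn$ is only defined on non-empty multisets. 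Compared with citing the source, your route is more self-contained and highlights a useful organising fact — that all eight identities reduce to $\drawdelete$ being itself a hypergeometric channel plus three one-step cancellations — at the cost of the (admitted) bookkeeping in the direct verifications; the alternative Chu–Vandermonde route you sketch in parentheses is the more standard one-shot calculation and trades induction for a single sum rearrangement.
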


The last two items express that multinomial and P\'olya form cones for
the infinite chain of draw-delete channels that appears in a
categorical perspective on De Finetti's theorem, see~\cite{JacobsS20}
and~\cite{jaynes1982some}. This fails in the hypergeometric case since
the draw size $K$ must remain smaller than the size of the urn. 


\section{Continuous distributions}\label{ContDstSec}

In the previous section, we have seen finite discrete probability
distributions over an arbitrary set. There are also continuous
distributions, defined on measurable spaces. Here we need such
distributions only on one particular kind of spaces, namely on
simplices $\simplex{n}$, see~\eqref{SimplexEqn}. The only
distributions that we need are given by (polynomial) functions $f
\colon \simplex{n} \rightarrow \R$ with $\int_{\simplex{n}} f = 1$.
Such an $f$ is called a (probability) density function. It gives rise
to probability measure $\Phi$ that sends a measurable subset $M\subseteq
\simplex{n}$ to the probability $\int_{M} f \in [0,1]$. Such measures
are elements of the set $\Giry\big(\simplex{n}\big)$, where $\Giry$ is
the Giry monad, see \textit{e.g.}~\cite{Panangaden09,Jacobs18c} for
further information.  At first we require that such density functions
are nonnegative, so $f \geq 0$, but later we drop this requirement,
for so-called signed distributions (see the next section).

\begin{definition}
	\label{DirichletDef}
	Let $\upsilon\in\fullMlt(\finset{n})$ be an urn with full support (for
	$n\geq 1$). 
	\begin{enumerate}
		\item \label{DirichletDefDens} It gives rise to the
                  Dirichlet density $\dirden(\upsilon) \colon
                  \simplex{n} \rightarrow \nnR$ given on
                  $\vec{r}\in\simplex{n}$ by:
		\[ \begin{array}{rclcrcl}
			\dirden(\upsilon)(\vec{r})
			& \coloneqq &
			\displaystyle\frac{(\|\upsilon\|-1)!}{\facto{(\upsilon-\one_{n})}} \cdot
			\prod_{i\in\finset{n}} \, r_{i}^{\upsilon(i)-1}
                        & \qquad\mbox{where}\qquad &
                        \one_{n}
                        & \coloneqq &
                        \displaystyle\sum_{0\leq i< n} 1\ket{i} 
                        \,\in\, \fullMlt(\finset{n}).
		\end{array} \]
		
		\noindent We shall drop the index $n$ from $\one_{n}$
                when it is clear from the context.
		
		\item The associated probability measure $\dirdst(\upsilon)$ is
		defined on measurable subsets $M \subseteq \simplex{n}$ as:
		\[ \begin{array}{rcl}
			\dirdst(\upsilon)(M)
			& \coloneqq &
			\displaystyle\int_{\vec{r}\in M} \dirden(\upsilon)(\vec{r}) \intd \vec{r}.
		\end{array} \]
	\end{enumerate}
\end{definition}

The function $\dirden(\upsilon)$ in~\eqref{DirichletDefDens} is a
proper probability density because of the following standard equation
that explains the form of the Dirichlet normalisation constant,
for $\upsilon\in\fullMlt(\finset{n})$.
\begin{equation}
	\label{DirichletNormalisationEqn}
	\begin{array}{rcl}
		\displaystyle\int_{\vec{r}\in\simplex{n}}\,
		\prod_{i\in\finset{n}} \, r_{i}^{\upsilon(i)-1} \intd \vec{r}
		& = &
		\displaystyle\frac{\facto{(\upsilon-\one)}}{(\|\upsilon\|-1)!}.
	\end{array}
\end{equation}

\noindent We use Dirichlet for urns $\upsilon$ with positive natural
numbers as multiplicities. This can be generalised to urns with
positive real numbers as multiplicities --- using the Gamma function
instead of factorials --- but that is not needed in the current
setting.

In the sequel we shall use the bind notation $\bind{c}{\Phi}$ also for
continuous measures, but in a very restricted form, namely for
measures $\Phi$ on $\simplex{n}$ given by a probability density
function $f$ and for channels $c \colon \Dst(\finset{n}) \chanto
\Mlt[K](\finset{n})$. Categorically, this bind is the Kleisli
extension for the Giry monad $\Giry$, see
\textit{e.g.}~\cite{Panangaden09,Fritz20} for details. We will not
elaborate this background and will simply use the relevant equation,
which is of the following form, for $\varphi\in\Mlt[K](\finset{n})$,
\begin{equation}
	\label{ContinuousBindEqn}
	\hspace*{-0.7em}\begin{array}{rcl}
		\bind{c}{\Phi}
		& \coloneqq &
		\displaystyle \!\sum_{\varphi\in\Mlt[K](\finset{n})} \!\!
		\left(\int_{\vec{r}\in\simplex{n}} \!
		c(\vec{r})(\varphi) \cdot f(\vec{r})
		\intd \vec{r}\right)\bigket{\varphi},
        \qquad \mbox{where we identify $\Dst(\finset{n})$ and $\Delta^{n}$.}
	\end{array}
\end{equation}

\begin{figure*}[th]
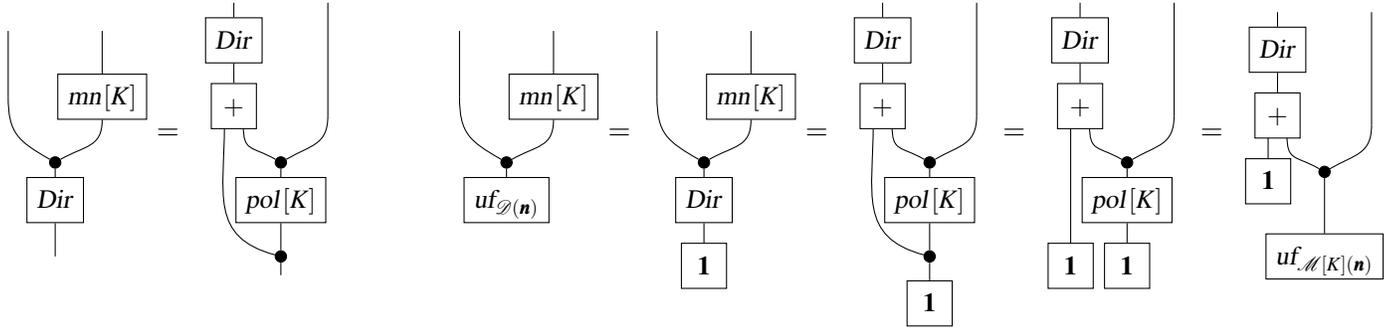

  \vspace*{-0.5em}
  \[ \hspace*{-2.5em}\hbox{\tikzfig{Mulnom-dagger-proof}} \]
  \vspace*{-1.5em}
  \caption{On the left the equation expressing that multinomial is a
    sufficient statistic for Dirichlet; on the right the string
    diagrammatic proof that the dagger of Dirichlet is multinomial,
    with the uniform distribution as prior, see
    Theorem~\ref{DirichletThm} for details. The boxed $\one$ is the
    point/singleton distribution $1\ket{\one}$. Such point
    distributions commute with copiers.}
	\label{DirichletFig}
\end{figure*}

The next result summarises the close relationship between multinomial,
P\'olya, and Dirichlet distributions.  The first two points are
well-known, but the third one probably a bit less --- although it
follows easily from the conjugate prior situation (see
also~\cite{Jacobs20a}). We use string diagrammatic notation, with
flows from bottom to top, since it best displays what is going on,
see~\cite{Fritz20} for details. Proofs are in
Appendix~\ref{ProofAppendix}.

\begin{theorem}
	\label{DirichletThm}
	Let $\upsilon\in\fullMlt(\finset{n})$ be a multiset / urn with full
	support, of size $L \coloneqq \|\upsilon\|$, where $n\geq 1$, and let
	$K$ be an arbitrary number.
	\begin{enumerate}
		\item \label{DirichletThmPol} Multinomial over
                  Dirichlet is P\'olya:
                  $\bind{\multinomial[K]}{\dirdst(\upsilon)} =
                  \polya[K](\upsilon)$.
		
		\item \label{DirichletThmConj} Multinomial is conjugate prior of
		Dirichlet: updating $\dirdst(\upsilon)$ with the predicate /
		likelihood $\multinomial[K](-)(\varphi)$ is $\dirdst\big(\upsilon +
		\varphi\big)$. This is expressed diagrammatically on the left in
		Figure~\ref{DirichletFig}.
		
		\item \label{DirichletThmDag} Multinomial is the
                  dagger of Dirichlet w.r.t.\ the uniform distribution
                  $\uniform[\Dst(\finset{n})]$ on
                  $\Dst(\finset{n})$. In the language / notation
                  of~\cite{ClercDDG17,ChoJ19} this is expressed as:
                  $\multinomial[K] =
                  \dirdst\big(\one\shortplus-\big)^{\dag}_{\uniform[\Dst(\finset{n})]}$.
		
		\item \label{DirichletThmFlrn} When we slightly massage the sample
		channel from Proposition~\ref{DrawDstProp}~\eqref{DrawDstPropFlrnMn}
		to $\sample \colon \simplex{n} \chanto \finset{n}$ given by
		$\sample(\vec{r}) = \sum_{i\in\finset{n}} r_{i}\ket{i}$, then: $\bind{\sample}{\dirdst(\upsilon)} = \flrn(\upsilon)$. \QED
	\end{enumerate}
\end{theorem}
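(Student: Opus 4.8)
The four items are largely independent, so I would treat them in turn, reusing the Dirichlet normalisation identity~\eqref{DirichletNormalisationEqn} and the continuous bind formula~\eqref{ContinuousBindEqn} throughout.

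For item~\eqref{DirichletThmPol}, the plan is a direct computation. Plugging the definitions of $\multinomial[K]$ and $\dirden(\upsilon)$ into~\eqref{ContinuousBindEqn} gives, for each $\varphi\in\Mlt[K](\finset{n})$, the coefficient
\[
\coefm{\varphi}\cdot\frac{(\|\upsilon\|-1)!}{\facto{(\upsilon-\one)}}\int_{\vec{r}\in\simplex{n}}\prod_{i}r_{i}^{\varphi(i)+\upsilon(i)-1}\intd\vec{r},
\]
and the integral is exactly the case of~\eqref{DirichletNormalisationEqn} for the urn $\upsilon+\varphi$ (which still has full support), namely $\facto{(\upsilon+\varphi-\one)}/(\|\upsilon\|+K-1)!$. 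It then remains to check that the resulting product of factorials, after substituting $\coefm{\varphi}=\|\varphi\|!/\facto{\varphi}$, rearranges into $\big(\binom{\upsilon}{\varphi}\big)/\big(\binom{L}{K}\big)$ using Definition~\ref{MltNumberDef}(6); this is a routine but slightly fiddly manipulation of the ``multiset binomial'' formula.

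Item~\eqref{DirichletThmConj} is the conjugate-prior statement: Bayesian updating of $\dirdst(\upsilon)$ along the likelihood $\vec{r}\mapsto\multinomial[K](\vec{r})(\varphi)$ yields $\dirdst(\upsilon+\varphi)$. Here I would just compute the (unnormalised) posterior density $\vec{r}\mapsto\multinomial[K](\vec{r})(\varphi)\cdot\dirden(\upsilon)(\vec{r})$, observe it is proportional to $\prod_i r_i^{(\upsilon+\varphi)(i)-1}$, and invoke~\eqref{DirichletNormalisationEqn} once more to identify the normalising constant as the Dirichlet constant for $\upsilon+\varphi$; the coefficient $\coefm{\varphi}$ is absorbed into normalisation. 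I would then transcribe this as the string-diagram equation on the left of Figure~\ref{DirichletFig}. Item~\eqref{DirichletThmDag} follows from~\eqref{DirichletThmConj} together with item~\eqref{DirichletThmPol}: the dagger (Bayesian inversion) of the channel $\dirdst(\one\shortplus-)\colon \Mlt[K](\finset{n})\chanto\Dst(\finset{n})$ with respect to the uniform prior $\uniform[\Dst(\finset{n})]$ is computed by the standard formula (as in~\cite{ChoJ19}); since $\uniform[\Dst(\finset{n})]=\dirdst(\one)$ and updating $\dirdst(\one)$ with $\multinomial[K](-)(\varphi)$ gives $\dirdst(\one+\varphi)$ by~\eqref{DirichletThmConj}, the inversion sends $\varphi$ back to the distribution with density proportional to $\multinomial[K](\vec r)(\varphi)$ against the uniform, which unwinds to $\multinomial[K]$ — I would present this either as the string-diagrammatic argument sketched on the right of Figure~\ref{DirichletFig}, using that point distributions commute with copiers, or equivalently by a short direct check that the joint states agree.

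Item~\eqref{DirichletThmFlrn} is again a one-line integral: $\bind{\sample}{\dirdst(\upsilon)}=\sum_i\big(\int_{\simplex{n}}r_i\,\dirden(\upsilon)(\vec r)\intd\vec r\big)\ket{i}$, and the $i$-th integral is the mean of the $i$-th Dirichlet coordinate, which by~\eqref{DirichletNormalisationEqn} (applied to $\upsilon+1\ket{i}$) equals $\upsilon(i)/\|\upsilon\|$, i.e.\ $\flrn(\upsilon)(i)$. The only mild obstacle across the whole theorem is bookkeeping: making sure the factorial identities involving $\facto{-}$, $\coefm{-}$, and the multiset binomial $\big(\binom{-}{-}\big)$ are applied correctly (in particular that full support is preserved under adding $\varphi$), and — for items~\eqref{DirichletThmConj} and~\eqref{DirichletThmDag} — keeping the string-diagrammatic and measure-theoretic presentations consistent. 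There is no conceptual difficulty; everything reduces to~\eqref{DirichletNormalisationEqn} plus the definitions.
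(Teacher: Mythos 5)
Your plan is correct and follows essentially the same route as the paper: item~\eqref{DirichletThmPol} by plugging the multinomial and Dirichlet densities into~\eqref{ContinuousBindEqn} and applying~\eqref{DirichletNormalisationEqn} to the urn $\upsilon+\varphi$, item~\eqref{DirichletThmFlrn} by pulling out $\upsilon(i)/L$ and normalising against $\upsilon+1\ket{i}$, and item~\eqref{DirichletThmDag} via the diagrammatic argument in Figure~\ref{DirichletFig} together with the observations $\uniform[\Dst(\finset{n})]=\dirdst(\one)$ and $\uniform[{\Mlt[K](\finset{n})}]=\polya[K](\one)$. The only place you go beyond the paper is item~\eqref{DirichletThmConj}, for which the paper merely remarks the details are similar and omits them; your sketch (posterior density proportional to $\prod_i r_i^{(\upsilon+\varphi)(i)-1}$, then renormalise via~\eqref{DirichletNormalisationEqn}) is exactly the standard conjugate-prior calculation and is fine.
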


The first item of Theorem~\ref{DirichletThm} tells that P\'olya is
multinomial over Dirichlet. This is an important starting point for
this paper, since we asked ourselves the question whether there is
also a distribution, like Dirichlet, such that multinomial over it is
hypergeometric. We shall see below that the so-called `signed'
Dirichlet distributions achieve this. But first we need to set the
scene for these signed distributions.

\section{Signed distributions}\label{SignDstSec}

We now introduce signed distribution, both in the discrete case and in
the continuous case. As before, we only need continuous distributions
on simplices.

\begin{definition}
	\label{DiscSignedDstDef}
	\begin{enumerate}
		\item A signed discrete probability distribution on a set $X$ is a
		function $\sigma \colon X \rightarrow \R$ with finite support
		$\supp(\sigma) \coloneqq \setin{x}{X}{\sigma(x) \neq 0}$ and with
		$\sum_{x\in X} \sigma(x) = 1$. We may equivalently write such a
		signed discrete distribution in ket notation as a finite formal sum
		$\sum_{i} r_{i}\ket{x_i}$ where $r_{i}\in\R$ satisfy $\sum_{i}r_{i}
		= 1$. We shall write $\Sgn(X)$ for the set of signed discrete probability
		distributions on $X$.
		\item A signed continuous probability distribution, on a simplex
		$\simplex{n}$, is given by a signed density function $f\colon
		\simplex{n} \rightarrow \R$ with $\int_{\vec{r}\in\simplex{n}}
		f(\vec{r}) \intd\vec{r} = 1$.
	\end{enumerate}
\end{definition}

An example of a signed discrete distribution is $\frac{1}{2}\ket{a} -
\frac{1}{4}\ket{b} + \frac{3}{4}\ket{c}$. We do not offer an
operational explanation for what such negative probabilities mean but
treat signed distributions as mathematical objects of their own. It is
not hard to see that signed discrete distributions $\Sgn$ form a monad
on the category of sets and functions. It is affine, in the sense that
$\Sgn(\finset{1}) \cong \finset{1}$, but it differs from $\Dst$ for
instance because it is not \emph{strongly} affine, as defined
in~\cite{Jacobs16a}.


\hide{

Let $X,Y$ be finite set and $\omega\in\Dst(X\times Y)$ be a joint
distribution, with full support. A basic observation is that the two
marginal distributions, in $\Dst(X)$ and in $\Dst(Y)$, also have full
support. This property fails in the signed case, since signed
probabilities may cancel each other out. Consider for instance $\sigma
= \frac{1}{2}\bigket{00} - \frac{1}{2}\bigket{01} +
\frac{1}{2}\bigket{10} + \frac{1}{2}\bigket{11}$ in
$\Sgn(\finset{2}\times\finset{2})$. Clearly, $\sigma$ has full
support. Its first marginal is $1\ket{1}$ and it second marginal is
$1\ket{0}$, neither with full support.
}



\section{Dual bases}\label{DualBasisSec}


The probability mass function of the multinomial distribution is of a
particularly tractable form, namely a polynomial function $\simplex{n}
\to \R$, on the simplex $\simplex{n}$.


\begin{definition}
	For $\varphi \in \Mlt[K](\finset n)$, we define the \emph{multinomial}
	$\multinomialvector{\varphi}$ as
	\[ \begin{array}{rccclcrcl}
		\multinomialvector{\varphi}(\vec x)
		& \coloneqq &
		\coefm{\varphi} \cdot \vec x^\varphi
		& = &
		\displaystyle\coefm{\varphi} \cdot 
                \prod_{i \in \finset n}\, x_i^{\varphi(i)}
& \qquad\mbox{with `monomial'}\qquad &
	\vec x^\varphi
	& \coloneqq &
	\displaystyle\prod_{i \in \finset n}\, x_i^{\varphi(i)}.
	\end{array} \]
\end{definition}

For every probability vector $\vec{r}\in\simplex{n}$, one has
$\multinomialvector{\varphi}(\vec{r}) =
\multinomial[K](\vec{r})(\varphi)$, via the identification
$\simplex{n} \cong \Dst(\finset{n})$.

	
	\begin{definition}
		\label{PolFunSpaceDef}
		For numbers $n,K$ we write $\HP n K$ for the real
                vector space of polynomial functions $\simplex n \to
                \R$ of degree $K$. The multinomials
                $\multinomialvector{\varphi}$ for
                $\varphi\in\Mlt[K](\finset{n})$ form a basis of this
                space, and so we have as dimension $\dim\big(\HP n
                K\big) = \bibinom n K$.  This vector space $\HP n K$
                is a Hilbert space via an inner product defined on
                $f,g\colon \simplex{n} \rightarrow \R$ as:
		\begin{equation}
			\label{InnerProdEqn}
			\begin{array}{rcl}
				\inprod{f}{g}
				& \coloneqq &
				\displaystyle\int_{\vec{r}\in\simplex{n}} f(\vec{r})\cdot g(\vec{r}) \intd \vec{r}.
			\end{array}
		\end{equation}
	\end{definition}

	The dual of a basis $(b_{i})$ of a space $V$ is generally
        understood as a basis of the dual space $V^*$. In a Hilbert
        space such a dual basis can be described as the elements
        $(d_{i})$ of the space itself which are uniquely determined by
        the relationship $\inprod{b_{i}}{d_{j}} = \delta_{ij}$, so
        that $\inprod{b_{i}}{d_{i}} = 1$ and $\inprod{b_{i}}{d_{j}} =
        0$ for $i\neq j$.
	
	\begin{definition}
		\label{MHPDualDef}
		The \emph{dual multinomials} $(\dualbasevector{\varphi})$ are defined
		as the dual basis of $\HP n K$ to the multinomials
		$(\multinomialvector{\varphi})$, and are as such uniquely
		characterised by the property $\varphi,\psi\in\Mlt[K](\finset{n})$,
		\begin{equation}
			\label{MHPDualInprodEqn}
			\begin{array}{rcccl}
				\inprod{\multinomialvector{\varphi}}{\dualbasevector{\psi}}
				& = &
				\delta_{\varphi,\psi}
				& = &
				\begin{cases}
					1 & \mbox{if }\varphi=\psi
					\\
					0 & \mbox{if }\varphi\neq\psi,
				\end{cases}
			\end{array}
		\end{equation}
	\end{definition}
	
	What do we know about this dual basis? Of course we can
        express the dual basis vectors $\dualbasevector{\psi}$ in
        terms of the original basis, say via scalars $c_{\chi,\psi}$
        satisfying, for each $\psi\in\Mlt[K](\finset{n})$,
	\begin{equation}
		\label{DualBasisEqn}
		\begin{array}{rcccl}
			\dualbasevector{\psi}
			& = &
			\displaystyle\sum_{\chi\in\Mlt[K](\finset{n})} 
			c_{\chi,\psi} \cdot \vec x^{\chi}
			& = &
			\displaystyle\sum_{\chi\in\Mlt[K](\finset{n})} 
			\frac{c_{\chi,\psi}}{\coefm{\chi}} \cdot \multinomialvector{\chi}.
		\end{array}
	\end{equation}
	
	\noindent By exploiting the equations~\eqref{MHPDualInprodEqn}
        and using the linearity of the inner product in each of its
        arguments (\textit{i.e.}~bilinearity), we obtain the equation
	\begin{equation}
		\label{DeltaSolutionEqn}
		\begin{array}{rcccl}
			\delta_{\varphi,\psi}
			& = &
			\inprod{\multinomialvector{\varphi}}{\dualbasevector{\psi}}
			& = &
			\displaystyle\sum_{\chi\in\Mlt[K](\finset{n})} 
			c_{\chi,\psi} \cdot 
			\frac{\inprod{\multinomialvector{\varphi}}{\multinomialvector{\chi}}}
			{\coefm{\chi}}. 
		\end{array}
	\end{equation}
	
	\noindent We note that:
	\[ \begin{array}{rcl}
		\inprod{\multinomialvector{\varphi}}{\multinomialvector{\chi}}
		\hspace*{\arraycolsep}\smash{\stackrel{\eqref{InnerProdEqn}}{=}}\hspace*{\arraycolsep}
		\coefm{\varphi}\cdot\coefm{\chi}\cdot \displaystyle\int_{\vec{r}\in\simplex{n}} \!\!
		\left(\prod_{i\in\finset{n}} r_{i}^{\varphi(i)}\right) \!\cdot\!
		\left(\prod_{i\in\finset{n}} r_{i}^{\chi(i)}\right)\!\! \intd\vec{r}
		 & = &
		\coefm{\varphi}\cdot\coefm{\chi}\cdot \displaystyle\int_{\vec{r}\in\simplex{n}} \!
		\prod_{i\in\finset{n}} r_{i}^{(\varphi+\chi)(i)} \intd\vec{r}
		\\
		& \smash{\stackrel{\eqref{DirichletNormalisationEqn}}{=}} &
		\coefm{\varphi}\cdot\coefm{\chi}\cdot \displaystyle
		\frac{\facto{(\varphi+\chi)}}{(2K\shortplus n\shortminus 1)!}
	\end{array} \]
	
	\noindent There are three square matrices at hand, of size
        $\bibinom{n}{K}\times\bibinom{n}{K}$, with multisets as
        indices, namely:
	\[ \begin{array}{rclcrclcrcl}
		C 
		& = &
		\Big(c_{\varphi,\psi}\Big)_{\!\varphi,\psi\in\Mlt[K](\finset{n})}
		& \qquad &
		\mathsl{FS}
		& = &
		\Big(\facto{(\varphi+\psi)}\!\Big)_{\!\varphi,\psi\in\Mlt[K](\finset{n})}
		& \qquad &
		\mathsl{D}
		& = &
		\Big(\coefm{\varphi}\cdot\delta_{\varphi,\psi}\Big)_{\!\varphi,\psi\in\Mlt[K](\finset{n})}.
	\end{array} \]
	
	
	\noindent This $C$ is the matrix of scalars that we are
        looking for, $\mathsl{FS}$ contains the factorials-of-sums of
        multisets, and $\mathsl{D}$ is a diagonal matrix with multiset
        coefficients.  Equation~\eqref{DeltaSolutionEqn} can now be
        written as:
	\[ \begin{array}{rcccl}
		(2K\shortplus n\shortminus 1)!\cdot\delta_{\varphi,\psi}
		& = &
		\displaystyle\sum_{\chi\in\Mlt[K](\finset{n})} 
		\coefm{\varphi}\cdot \mathsl{FS}_{\varphi,\chi} \cdot C_{\chi,\psi}
		& = &
		\Big(\mathsl{D} \cdot \mathsl{FS} \cdot C\Big)_{\varphi,\psi}.
	\end{array} \]
	
	\noindent We then get $(2K+n-1)!\cdot \mathsl{FS}^{-1} \cdot
	\mathsl{D}^{-1} = C$, so that the coefficients that we seek are
	obtained as:
	\begin{equation}
		\label{DualBasisCoefficientEqn}
		\begin{array}{rcl}
			c_{\varphi,\psi}
			& = &
			\displaystyle \frac{(2K\shortplus n\shortminus 1)!}{\coefm{\psi}}\cdot 
			\Big(\mathsl{FS}^{-1}\Big)_{\!\varphi,\psi}.
		\end{array}
	\end{equation}
	
	\noindent These matrix inverses $\mathsl{FS}^{-1}$ exist since
	$\mathsl{FS}$ is a symmetric positive definite matrix. We give an extended example calculation in the appendix (Example~\ref{BaseCoefficientEx}). \\


	The following is a crucial property of dual bases, as
	introduced in Definition~\ref{MHPDualDef}.

	\begin{proposition}
		\label{DualBaseSgnDstProp}
		Each dual basis function $\dualbasevector{\psi} \in
                \HP n K$, associated with a multiset
                $\psi\in\Mlt[K](\finset{n})$, is a continuous signed
                probability density on $\simplex{n}$, that is:
		\[ \begin{array}{rcl}
			\displaystyle\int_{\vec{r}\in\simplex{n}} \! \dualbasevector{\psi}(\vec{r})
			\intd\vec{r}
			& = &
			1.
		\end{array} \]
	\end{proposition}

	\begin{proof}
		We use that multinomial distributions $\multinomial[K]$ form a
		probability distribution; this means that the multinomials
		$\multinomialvector{\varphi}$ form a partition of unity,
		\textit{i.e.}~for all $\vec r \in \simplex n$:
		\[ \begin{array}{rcl}
			\displaystyle\sum_{\varphi \in \Mlt[K](\finset n)} \multinomialvector{\varphi}(\vec r)
			& = &
			1.
		\end{array} \]
		
		\noindent Hence we obtain
		\[ \begin{array}[b]{rcl}
			\displaystyle\int_{\vec{r}\in\simplex{n}} \! \dualbasevector{\psi}(\vec{r})
			\intd\vec{r}
			\hspace*{\arraycolsep}=\hspace*{\arraycolsep}
			\displaystyle\int_{\vec{r}\in\simplex{n}} \! 1\cdot\dualbasevector{\psi}(\vec{r})
			\intd\vec{r}
			& = &
			\displaystyle\int_{\vec{r}\in\simplex{n}} \! 
			\left(\sum_{\varphi\in\Mlt[K](\finset{n})} 
			\multinomialvector{\varphi}(\vec{r})\right)
			\cdot\dualbasevector{\psi}(\vec{r}) \rlap{$\intd\vec{r}$}
			\\[+1.4em]
			& = &
			\displaystyle\sum_{\varphi\in\Mlt[K](\finset{n})} \int_{\vec{r}\in\simplex{n}} \! 
			\multinomialvector{\varphi}(\vec{r})
			\cdot\dualbasevector{\psi}(\vec{r}) \intd\vec{r}
			\\[+1.4em]
			& \smash{\stackrel{\eqref{InnerProdEqn}}{=}} &
			\displaystyle\sum_{\varphi\in\Mlt[K](\finset{n})} 
			\inprod{\multinomialvector{\varphi}}{\dualbasevector{\psi}}
			\hspace*{\arraycolsep}\smash{\stackrel{\eqref{MHPDualInprodEqn}}{=}}\hspace*{\arraycolsep}
			\displaystyle\sum_{\varphi\in\Mlt[K](\finset{n})} 
			\delta_{\varphi,\psi}
			\hspace*{\arraycolsep}=\hspace*{\arraycolsep}
			1.
		\end{array} \eqno{\blacktriangleleft} \]
	\end{proof}

	In the beginning of this proof, we use that the pointwise sum of
	the multinomial basis functions $\multinomialvector{\varphi}$, for $\varphi \in
	\Mlt[K](\finset n)$, is equal to the constant-one function $\one \colon
	\Delta^{n} \rightarrow \R$. The sum of the dual basis functions
	$\dualbasevector{\varphi}$ is also constant.

	\begin{proposition}
		\label{DualBaseSumProp}
		Fix numbers $n,K$ and consider the dual basis function
		$\dualbasevector{\varphi} \in \HP{n}{K}$, for $\varphi \in
		\Mlt[K](\finset n)$. Then their pointwise sum is a constant function:
		\[ \begin{array}{rcl}
			\displaystyle\sum_{\varphi \in \Mlt[K](\finset n)} \! \dualbasevector{\varphi}
			& \,=\, &
			\displaystyle\frac{(K + n - 1)!}{K!}.
		\end{array} \]
	\end{proposition}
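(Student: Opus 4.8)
The plan is to show that the pointwise sum $S \coloneqq \sum_{\varphi\in\Mlt[K](\finset{n})}\dualbasevector{\varphi}$ coincides, as an element of the inner-product space $\HP{n}{K}$, with the constant function $c$ taking the value $\frac{(K+n-1)!}{K!}$. To do this I would compare their inner products against the basis of multinomials $(\multinomialvector{\chi})_{\chi\in\Mlt[K](\finset{n})}$: since $\inprod{-}{-}$ is a genuine (positive definite) inner product on the finite-dimensional space $\HP{n}{K}$ and the $\multinomialvector{\chi}$ form a basis, an element is uniquely pinned down by all the numbers $\inprod{\multinomialvector{\chi}}{-}$, so agreement on the basis forces $S=c$. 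One small preliminary: $c$ really lies in $\HP{n}{K}$, because $\sum_{\varphi\in\Mlt[K](\finset{n})}\multinomialvector{\varphi}$ is the constant function $1$ (the partition-of-unity property used in the proof of Proposition~\ref{DualBaseSgnDstProp}), hence $c = \frac{(K+n-1)!}{K!}\cdot\big(\sum_{\varphi}\multinomialvector{\varphi}\big)\in\HP{n}{K}$.

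For the sum $S$, bilinearity of the inner product and the defining relation~\eqref{MHPDualInprodEqn} of the dual basis immediately give, for each $\chi\in\Mlt[K](\finset{n})$,
\[ \inprod{\multinomialvector{\chi}}{S} \;=\; \sum_{\varphi\in\Mlt[K](\finset{n})}\inprod{\multinomialvector{\chi}}{\dualbasevector{\varphi}} \;=\; \sum_{\varphi\in\Mlt[K](\finset{n})}\delta_{\chi,\varphi} \;=\; 1. \]

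For the constant $c$, the main computation is the integral of a single multinomial over the simplex. Unfolding $\multinomialvector{\chi}(\vec{r}) = \coefm{\chi}\cdot\prod_{i}r_{i}^{\chi(i)}$ and applying the Dirichlet normalisation identity~\eqref{DirichletNormalisationEqn} to the full-support urn $\upsilon = \chi+\one$ (so that $\upsilon(i)-1=\chi(i)$ and $\|\upsilon\|=K+n$), together with $\coefm{\chi}=\|\chi\|!/\facto{\chi}=K!/\facto{\chi}$, yields
\[ \int_{\vec{r}\in\simplex{n}}\multinomialvector{\chi}(\vec{r})\intd\vec{r} \;=\; \coefm{\chi}\cdot\frac{\facto{\chi}}{(K+n-1)!} \;=\; \frac{K!}{(K+n-1)!}. \]
The decisive feature is that this moment does \emph{not} depend on $\chi$: the coefficient $\coefm{\chi}$ cancels the factorial $\facto{\chi}$ appearing in the Dirichlet constant. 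Hence $\inprod{\multinomialvector{\chi}}{c} = c\cdot\int_{\simplex{n}}\multinomialvector{\chi} = \frac{(K+n-1)!}{K!}\cdot\frac{K!}{(K+n-1)!} = 1$ for every $\chi$, matching the value computed for $S$.

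It then follows that $\inprod{\multinomialvector{\chi}}{S-c}=0$ for all $\chi$ in a basis of $\HP{n}{K}$; writing $S-c$ in this basis and using bilinearity gives $\inprod{S-c}{S-c}=0$, so $S=c$ by positive definiteness of $\inprod{-}{-}$. I do not expect a genuine obstacle here; the only points needing a moment's care are the constancy in $\chi$ of the simplex-moment of $\multinomialvector{\chi}$ (which is exactly what makes the dual basis sum to a constant) and the fact that $\HP{n}{K}$ is a Hilbert space, as recorded in Definition~\ref{PolFunSpaceDef}, which legitimises the ``determined by inner products against a basis'' step. An entirely equivalent phrasing just records that $S$ is characterised by $\inprod{\multinomialvector{\chi}}{S}=1$ for all $\chi$ and exhibits the constant $\frac{(K+n-1)!}{K!}$ as the solution.
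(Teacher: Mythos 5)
Your proof is correct and follows essentially the same route as the paper: both arguments hinge on the moment computation $\int_{\simplex{n}}\multinomialvector{\chi} = K!/(K+n-1)!$ being independent of $\chi$, combined with the duality relation~\eqref{MHPDualInprodEqn}. The paper phrases it by expanding the constant function $\one$ in the dual basis and computing the coefficients, while you compare inner products of both sides against the multinomial basis and invoke non-degeneracy; these are interchangeable presentations of the same idea.
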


	\begin{proof}
		Since the vectors $\dualbasevector{\varphi}$ form a
                basis we can express the constant-one function $\one
                \colon \Delta^{n} \rightarrow \R$ with respect to this
                basis, say as: $\one = \sum_{\varphi \in
                  \Mlt[K](\finset n)} a_{\varphi} \cdot
                \dualbasevector{\varphi}$, for certain coefficients
                $a_{\varphi}$. For a fixed multiset
                $\psi\in\Mlt[K](\finset n)$ we compute the constant
                $a_{\psi}$ as follows.
		\[ \begin{array}{rcl}
			a_{\psi}
                        & = &
			\displaystyle\sum_{\varphi \in \Mlt[K](\finset n)} a_{\varphi} \cdot \delta_{\psi,\varphi}
			\hspace*{\arraycolsep}=\hspace*{\arraycolsep}
			\displaystyle\sum_{\varphi \in \Mlt[K](\finset n)} a_{\varphi} \cdot 
			\inprod{\multinomialvector{\psi}}{\dualbasevector{\varphi}}
			\hspace*{\arraycolsep}=\hspace*{\arraycolsep}
			\displaystyle \inprod{\multinomialvector{\psi}}
			{\sum_{\varphi \in \Mlt[K](\finset n)} a_{\varphi} \cdot \dualbasevector{\varphi}}
			\\[+1.2em]
			& = &
			\inprod{\multinomialvector{\psi}}{\one}
			\hspace*{\arraycolsep}=\hspace*{\arraycolsep}
			\displaystyle\int_{\vec{r}\in\simplex{n}} \! 
			\multinomialvector{\psi}(\vec{r}) \intd\vec{r}
			\hspace*{\arraycolsep}=\hspace*{\arraycolsep}
			\displaystyle \coefm{\psi} \cdot \int_{\vec{r}\in\simplex{n}} \! 
			\vec{r}^{\psi}(\vec{r}) \intd\vec{r}
			\hspace*{\arraycolsep}\smash{\stackrel{\eqref{DirichletNormalisationEqn}}{=}}\hspace*{\arraycolsep}
			\displaystyle \frac{K!}{\facto{\psi}} \cdot 
			\frac{\facto{\psi}}{(K \shortplus n \shortminus 1)!}
			\hspace*{\arraycolsep}=\hspace*{\arraycolsep}
			\displaystyle \frac{K!}{(K \shortplus n \shortminus 1)!}.
		\end{array} \]
		
		\noindent Thus, all these constants $a_{\psi}$ are the same. As a
		result:
		\[ \begin{array}{rcccl}
			\one
			& = &
			\displaystyle\sum_{\varphi \in \Mlt[K](\finset n)} 
			\frac{K!}{(K \shortplus n \shortminus 1)!} \cdot \dualbasevector{\varphi}
			& = &
			\displaystyle \frac{K!}{(K \shortplus n \shortminus 1)!} \cdot 
			\sum_{\varphi \in \Mlt[K](\finset n)} \dualbasevector{\varphi}.
		\end{array} \]
		
		\noindent By moving the fraction to the other side we are done. \QED
	\end{proof}

\section{Dual Dirichlet and signed hypergeometric}\label{SgnDirHypgeomSec}
	
In the previous section we have introduced the dual basis vectors
$\dualbasevector{\varphi}$ as duals to the multinomial vectors
$\multinomialvector{\varphi}$ and have seen that each of these
$\dualbasevector{\varphi}$ forms a signed probability density. We can
now start harvesting results, first by defining the associated
continuous probability measure.
	
	\begin{definition}
		\label{DualDirichletDef}
		Let $\upsilon\in\Mlt(\finset{n})$ be an multiset (thought of as an urn).
		\begin{enumerate}  
			\item We write $\dualdirdst(\upsilon)$ for the signed probability measure on
			$\simplex{n}$ given by the density $\dualbasevector{\upsilon}$. We
			call it the \emph{dual Dirichlet} distribution.
			
			\item For each number $K$ we define the \emph{signed hypergeometric}
			channel as the composite with multinomial draws
			\[ \begin{array}{rcl}
				\sgnhypergeometric[K]
				& \coloneqq &
				\multinomial[K] \klafter \dualdirdst \,\colon\, 
				\Mlt(\finset{n}) \rightarrow \Sgn\big(\Mlt[K](\finset{n})\big).
			\end{array} \]
			
			\noindent This means:
			\[ \begin{array}{rcccl}
				\sgnhypergeometric[K](\upsilon)
				& = &
				\bind{\multinomial[K]}{\dualdirdst(\upsilon)}   
				& = &
				\displaystyle\!\sum_{\varphi\in\Mlt[K](\finset{n})} \!\!
				\left(\int_{\vec{r}\in\simplex{n}} \!
				\multinomialvector{\varphi}(\vec{r}) \cdot 
				\dualbasevector{\upsilon}(\vec{r})\intd\vec{r}\right)\,\bigket{\varphi}.
			\end{array} \]
		\end{enumerate}
	\end{definition}

In Example~\ref{SgnHypergeomEx} in Appendix~\ref{ExampleAppendix} a
signed hypergeometric distribution is computed concretely. In the
multivariate case we do not have an explicit formula, as exists in the
bivariate case, see Equation~\eqref{BivariateEqn} in
Appendix~\ref{BivariateAppendix} for details. 
		
The next result shows that when there is no overdrawing, there is no
difference between signed and ordinary hypergeometric
probabilities. This means that the dual Dirichlet distribution
confirms the question that we originally set ourselves: there is a
distribution over which multinomials yield hypergeometric
distributions, in analogy with
Theorem~\ref{DirichletThm}~\eqref{DirichletThmPol}.
	
	\begin{theorem}
		\label{SgnDirichletHypgeomThm}
		Let urn $\upsilon\in\Mlt(\finset{n})$ have size $L =
                \|\upsilon\|$.  Then $\sgnhypergeometric[K](\upsilon)
                = \hypergeometric[K](\upsilon)$, for each $K\leq L$.
	\end{theorem}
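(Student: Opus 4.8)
The plan is to unwind both sides to the numbers attached to each basis multiset $\varphi\in\Mlt[K](\finset{n})$. By Definition~\ref{DualDirichletDef} and the continuous bind formula~\eqref{ContinuousBindEqn}, the coefficient of $\bigket{\varphi}$ in $\sgnhypergeometric[K](\upsilon)$ is $\int_{\vec{r}\in\simplex{n}}\multinomialvector{\varphi}(\vec{r})\cdot\dualbasevector{\upsilon}(\vec{r})\intd\vec{r}$, which by~\eqref{InnerProdEqn} is exactly the inner product $\inprod{\multinomialvector{\varphi}}{\dualbasevector{\upsilon}}$ (here $\upsilon\in\Mlt[L](\finset{n})$ since $\|\upsilon\| = L$, so $\dualbasevector{\upsilon}$ is the dual multinomial of degree $L$, living in the Hilbert space $\HP{n}{L}$). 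So the theorem reduces to the identity $\inprod{\multinomialvector{\varphi}}{\dualbasevector{\upsilon}} = \hypergeometric[K](\upsilon)(\varphi)$ for every $\varphi\in\Mlt[K](\finset{n})$; once this is established, summing against $\bigket{\varphi}$ over $\varphi\in\Mlt[K](\finset{n})$ reproduces precisely the defining sum of $\hypergeometric[K](\upsilon)$ in Definition~\ref{DrawDstDef}.

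The one real obstacle is a \emph{degree mismatch}: $\multinomialvector{\varphi}$ has degree $K$, whereas the defining relation~\eqref{MHPDualInprodEqn} of $\dualbasevector{\upsilon}$ only says $\inprod{\multinomialvector{\psi}}{\dualbasevector{\upsilon}} = \delta_{\psi,\upsilon}$ for the degree-$L$ multinomials $\psi\in\Mlt[L](\finset{n})$. I would resolve this by re-expanding $\multinomialvector{\varphi}$, regarded as an element of $\HP{n}{L}$ (legitimate since on the simplex $\sum_{i}r_{i} = 1$), in the degree-$L$ multinomial basis: multiplying by $\big(\sum_{i}r_{i}\big)^{L-K} = 1$ and expanding by the multinomial theorem gives $\multinomialvector{\varphi} = \sum_{\psi\in\Mlt[L](\finset{n})}a_{\psi}\cdot\multinomialvector{\psi}$ for suitable scalars $a_{\psi}$, and a short bookkeeping step with the multiset coefficients identifies $a_{\psi} = \binom{\psi}{\varphi}/\binom{L}{K}$ when $\varphi\leq\psi$ and $a_{\psi} = 0$ otherwise. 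Equivalently, and more slickly, this expansion is just the pointwise-on-$\simplex{n}$ instance of the channel identity $\hypergeometric[K]\klafter\multinomial[L] = \multinomial[K]$ from Proposition~\ref{DrawDstProp}~\eqref{DrawDstPropHgMn} (valid because $K\leq L$), which reads $\multinomialvector{\varphi}(\vec{r}) = \sum_{\psi\in\Mlt[L](\finset{n})}\multinomialvector{\psi}(\vec{r})\cdot\hypergeometric[K](\psi)(\varphi)$, so $a_{\psi} = \hypergeometric[K](\psi)(\varphi)$.

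With this expansion the rest is immediate: bilinearity of the inner product together with~\eqref{MHPDualInprodEqn} gives
\[
\inprod{\multinomialvector{\varphi}}{\dualbasevector{\upsilon}}
\;=\;
\sum_{\psi\in\Mlt[L](\finset{n})}\hypergeometric[K](\psi)(\varphi)\cdot\inprod{\multinomialvector{\psi}}{\dualbasevector{\upsilon}}
\;=\;
\sum_{\psi\in\Mlt[L](\finset{n})}\hypergeometric[K](\psi)(\varphi)\cdot\delta_{\psi,\upsilon}
\;=\;
\hypergeometric[K](\upsilon)(\varphi),
\]
which is the identity needed. The hypothesis $K\leq L$ is used exactly once, to make sense of the exponent $L-K\geq 0$ (equivalently, of $\hypergeometric[K]$ on urns of size $L$). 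I expect the degree-raising step to be the only point requiring a little care; unwinding the bind, recognising the integral as the inner product, bilinearity, and reading off the final sum are all routine.
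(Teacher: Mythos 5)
Your proof is correct and is essentially the paper's argument: both rest on the dual-basis Kronecker relation $\inprod{\multinomialvector{\psi}}{\dualbasevector{\upsilon}}=\delta_{\psi,\upsilon}$ for degree-$L$ multinomials together with the channel identity $\hypergeometric[K]\klafter\multinomial[L]=\multinomial[K]$ from Proposition~\ref{DrawDstProp}~\eqref{DrawDstPropHgMn}. The paper packages the calculation at the channel level---first deriving $\bind{\multinomial[L]}{\dualdirdst(\upsilon)}=1\ket{\upsilon}$ and then pushing forward through $\hypergeometric[K]$---whereas you unwind to coefficients and invoke bilinearity of the inner product, but the underlying algebra is identical.
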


	This says that when the size of the draw is at most the size of the
	urn --- so when there are no overdraws --- signed hypergeometric
	coincides with ordinary hypergeometric. In particular, in this case no
	negative probabilities appear in the signed hypergeometric.

	\begin{proof}
		We first note that:
		\[ \begin{array}{rcccccl}
			1
			& \smash{\stackrel{\eqref{MHPDualInprodEqn}}{=}} &
			\inprod{\multinomialvector{\upsilon}}{\dualbasevector{\upsilon}}
			& = &
			\displaystyle\int_{\vec{r}\in\simplex{n}} \! 
			\multinomialvector{\upsilon}(\vec{r}) \cdot \dualbasevector{\upsilon}(\vec{r})
			\intd\vec{r}
			& \smash{\stackrel{\eqref{ContinuousBindEqn}}{=}} &
			\Big(\bind{\multinomial[L]}{\dualdirdst(\upsilon)}\Big)(\upsilon).
		\end{array} \]
		
		\noindent As a result: $\bind{\multinomial[L]}{\dualdirdst(\upsilon)}
		= 1\ket{\upsilon}$. By combining this fact with
		Proposition~\ref{DrawDstProp}~\eqref{DrawDstPropHgMn} we are done:
		\[ \begin{array}[b]{rcl}
			\hypergeometric[K](\upsilon)
			\hspace*{\arraycolsep}=\hspace*{\arraycolsep}
			\bind{\hypergeometric[K]}{1\ket{\upsilon}}
			& = &
			\bind{\hypergeometric[K]}
			{\Big(\bind{\multinomial[L]}{\dualdirdst(\upsilon)}\Big)}
			\\[+0.2em]
			& = &
			\bind{\Big(\hypergeometric[K] \klafter \multinomial[L]\Big)}
			{\dualdirdst(\upsilon)}
			\hspace*{\arraycolsep}=\hspace*{\arraycolsep}
			\bind{\multinomial[K]}{\dualdirdst(\upsilon)}.
		\end{array} \eqno{\blacktriangleleft} \]
	\end{proof}

	\begin{corollary}
		\label{SgnDirichletHypgeomCor}
		Let $\upsilon\in\Mlt[L](\finset{n})$ be a multiset/urn of size $L\geq 1$.
		\begin{enumerate}
			\item \label{SgnDirichletHypgeomCorPow} For each multiset
			$\varphi\leq_{K}\upsilon$, that is, for each
			$\varphi\in\Mlt[K](\finset{n})$ with $\varphi\leq\upsilon$, and thus
			$K\leq L$,
			\[ \begin{array}{rcl}
				\displaystyle\int_{\vec{r}\in\simplex{n}} \!
				\vec{r}^{\,\varphi} \cdot \dualbasevector{\upsilon}(\vec{r})\intd\vec{r}
				& = &
				\displaystyle\frac{\facto{\upsilon}\cdot (L-K)!}
				{\facto{(\upsilon-\varphi)}\cdot L!}.
			\end{array} \]
			
			\item \label{SgnDirichletHypgeomCorVar} In particular, for each
			$i\in\supp(\upsilon) \subseteq \finset{n}$.
			\[ \begin{array}{rcl}
				\displaystyle\int_{\vec{r}\in\simplex{n}} \!
				r_{i} \cdot \dualbasevector{\upsilon}(\vec{r})\intd\vec{r}
				& = &
				\flrn(\upsilon)(i).
			\end{array} \]
		\end{enumerate}
	\end{corollary}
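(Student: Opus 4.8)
The plan is to obtain both identities directly from Theorem~\ref{SgnDirichletHypgeomThm}, which already carries the substance; what remains is bookkeeping with the factorial-of-multiset notation of Definition~\ref{MltNumberDef}. First I would note that the hypothesis $\varphi\leq_{K}\upsilon$ forces $K = \|\varphi\| \leq \|\upsilon\| = L$, so Theorem~\ref{SgnDirichletHypgeomThm} applies and gives $\sgnhypergeometric[K](\upsilon) = \hypergeometric[K](\upsilon)$. Reading off the coefficient of $\bigket{\varphi}$ on each side --- expanding the left-hand side with Definition~\ref{DualDirichletDef} and the right-hand side with Definition~\ref{DrawDstDef} --- yields
\[ \int_{\vec{r}\in\simplex{n}} \! \multinomialvector{\varphi}(\vec{r})\cdot\dualbasevector{\upsilon}(\vec{r}) \intd\vec{r} \;=\; \frac{\binom{\upsilon}{\varphi}}{\binom{L}{K}}. \]
This step is legitimate because Theorem~\ref{SgnDirichletHypgeomThm} also forces the signed coefficient at a multiset $\varphi\not\leq\upsilon$ to vanish, matching the ordinary hypergeometric; but here we only care about $\varphi\leq_{K}\upsilon$, so this subtlety does not intervene.

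Next I would divide through by the scalar $\coefm{\varphi} = K!/\facto{\varphi}$, using $\multinomialvector{\varphi} = \coefm{\varphi}\cdot\vec{r}^{\,\varphi}$, so that the left-hand integral becomes exactly $\int_{\vec{r}\in\simplex{n}} \vec{r}^{\,\varphi}\cdot\dualbasevector{\upsilon}(\vec{r})\intd\vec{r}$. The remaining computation is routine: substituting $\binom{\upsilon}{\varphi} = \facto{\upsilon}/\big(\facto{\varphi}\cdot\facto{(\upsilon-\varphi)}\big)$ and $\binom{L}{K} = L!/\big(K!\cdot(L-K)!\big)$ and cancelling the $\facto{\varphi}$ and $K!$ factors leaves $\facto{\upsilon}\cdot(L-K)!\,/\,\big(\facto{(\upsilon-\varphi)}\cdot L!\big)$, which is item~\eqref{SgnDirichletHypgeomCorPow}.

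For item~\eqref{SgnDirichletHypgeomCorVar} I would simply instantiate the first item at the singleton multiset $\varphi = 1\ket{i}$. This lies below $\upsilon$ exactly when $i\in\supp(\upsilon)$, and has size $K=1\leq L$, so item~\eqref{SgnDirichletHypgeomCorPow} applies; then $\vec{r}^{\,\varphi} = r_{i}$, the ratio $\facto{\upsilon}/\facto{(\upsilon - 1\ket{i})}$ collapses to $\upsilon(i)!/(\upsilon(i)-1)! = \upsilon(i)$, and $(L-1)!/L! = 1/L$, so the right-hand side becomes $\upsilon(i)/L = \flrn(\upsilon)(i)$ by~\eqref{FlrnEqn}.

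I do not expect a genuine obstacle: the only points requiring care are invoking Theorem~\ref{SgnDirichletHypgeomThm} with the right size constraint $K\leq L$, and then keeping track of the multiset-factorial identities from Definition~\ref{MltNumberDef}; everything else is mechanical cancellation.
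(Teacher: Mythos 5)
Your proposal is correct and follows essentially the same route as the paper: both read off the coefficient of $\bigket{\varphi}$ in the identity $\sgnhypergeometric[K](\upsilon) = \hypergeometric[K](\upsilon)$ from Theorem~\ref{SgnDirichletHypgeomThm}, rescale by $\coefm{\varphi}$ to pass from $\multinomialvector{\varphi}$ to $\vec{r}^{\,\varphi}$, and then simplify the multiset-binomial expression; item~\eqref{SgnDirichletHypgeomCorVar} is the specialisation $\varphi = 1\ket{i}$ in both. Your aside about coefficients vanishing at $\varphi\not\leq\upsilon$ is correct but, as you note, unnecessary for the claim.
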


	\begin{proof}
		\begin{enumerate}
			\item Since:
			\[ \hspace*{-3em}\begin{array}{rcl}
				\displaystyle\int_{\vec{r}\in\simplex{n}} \!
				\vec{r}^{\,\varphi} \cdot \dualbasevector{\upsilon}(\vec{r})\intd\vec{r}
			\hspace*{\arraycolsep}=\hspace*{\arraycolsep}
				\displaystyle\frac{1}{\coefm{\varphi}}\cdot \int_{\vec{r}\in\simplex{n}} \!
				\multinomialvector{\varphi}(\vec{r}) \cdot 
				\dualbasevector{\upsilon}(\vec{r})\intd\vec{r}
				& = &
				\displaystyle\frac{1}{\coefm{\varphi}}\cdot 
				\Big(\bind{\multinomial[K]}{\dualdirdst(\upsilon)}\Big)(\varphi)
				\\[+0.5em]
				& = &
				\displaystyle\frac{1}{\coefm{\varphi}}\cdot 
				\hypergeometric[K](\upsilon)(\varphi)
			\hspace*{\arraycolsep}=\hspace*{\arraycolsep}
				\displaystyle \frac{\facto{\varphi}}{K!} \cdot 
				\frac{\binom{\upsilon}{\varphi}}{\binom{L}{K}}
				\hspace*{\arraycolsep}=\hspace*{\arraycolsep}
				\displaystyle\frac{\facto{\upsilon}\cdot (L-K)!}
				{\facto{(\upsilon-\varphi)}\cdot L!}.
			\end{array} \]
			
			\item We apply the previous point with $\varphi = 1\ket{i}$ of
			size $1$. Then:
			\[ \hspace*{-2em}\begin{array}{rcccccccccl}
				\displaystyle\int_{\vec{r}\in\simplex{n}} \!
				r_{i} \cdot \dualbasevector{\upsilon}(\vec{r})\intd\vec{r}
				& = &
				\displaystyle\int_{\vec{r}\in\simplex{n}} \!
				\vec{r}^{\,1\ket{i}} \cdot \dualbasevector{\upsilon}(\vec{r})\intd\vec{r}
				& = &
				\displaystyle\frac{\facto{\upsilon}\cdot (L-1)!}
				{\facto{(\upsilon-1\ket{i})}\cdot L!}
				& = &
				\displaystyle\frac{\upsilon(i)}{L}
				& = &
				\displaystyle\frac{\upsilon(i)}{\|\upsilon\|}
				& = &
				\flrn(\upsilon)(i).
			\end{array} \eqno{\blacktriangleleft} \]
		\end{enumerate}
	\end{proof}

	\begin{proposition}
		\label{SgnDirichletProp}
		\begin{enumerate}
			\item \label{SgnDirichletPropZero} We write $\zero$ for the empty
			multiset and $\one$ for the multiset of singletons, say on
			$\finset{n}=\{0,\ldots,n-1\}$. Then $\dualdirdst(\zero) =
			\dirdst(\one)$ is the uniform measure on $\simplex{n}$.
			
			\item \label{SgnDirichletPropSample} The sample channel $\sample
			\colon \simplex{n} \rightarrow \Dst(\finset{n})$ from
			Theorem~\ref{DirichletThm}~\eqref{DirichletThmFlrn} gives, like for
			ordinary Dirichlet,
			\[ \begin{array}{rcl}
				\bind{\sample}{\dualdirdst(\upsilon)}
				& = &
				\flrn(\upsilon).
			\end{array} \]
		\end{enumerate}
	\end{proposition}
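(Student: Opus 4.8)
The plan is to prove the two items separately, in each case reducing to machinery already in place rather than computing with the explicit coefficients $c_{\chi,\psi}$ from~\eqref{DualBasisCoefficientEqn}.

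For item~\eqref{SgnDirichletPropZero}, the point is that $\zero$ has size $0$, so the density $\dualbasevector{\zero}$ of $\dualdirdst(\zero)$ lives in the one\nobreakdash-dimensional space $\HP{n}{0}$ of constant functions on $\simplex{n}$, and it is the \emph{only} dual basis vector of degree $0$. Hence Proposition~\ref{DualBaseSumProp}, applied with $K = 0$, reduces to the single term $\dualbasevector{\zero}$ and yields $\dualbasevector{\zero} = \frac{(0 + n - 1)!}{0!} = (n-1)!$ as a constant function. On the other hand, unfolding Definition~\ref{DirichletDef}~\eqref{DirichletDefDens} at $\upsilon = \one_{n}$ gives $\dirden(\one)(\vec{r}) = \frac{(n-1)!}{\facto{\zero}}\prod_{i} r_{i}^{0} = (n-1)!$, the same constant. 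So $\dualbasevector{\zero} = \dirden(\one)$, the two measures coincide, and since the density is constant this is the uniform measure on $\simplex{n}$, whose total volume $\frac{1}{(n-1)!}$ is exactly~\eqref{DirichletNormalisationEqn} applied to $\one$.

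For item~\eqref{SgnDirichletPropSample}, the key observation is that the sample channel $\sample\colon \simplex{n} \chanto \finset{n}$ of Theorem~\ref{DirichletThm}~\eqref{DirichletThmFlrn} coincides, under the identification $\Mlt[1](\finset{n}) \cong \finset{n}$ sending $1\ket{i}$ to $i$, with the multinomial channel $\multinomial[1]$: indeed $\multinomial[1](\vec{r}) = \sum_{i}\coefm{1\ket{i}}\,r_{i}\,\ket{1\ket{i}} = \sum_{i} r_{i}\ket{i}$ since $\coefm{1\ket{i}} = 1$. Therefore, by Definition~\ref{DualDirichletDef}, $\bind{\sample}{\dualdirdst(\upsilon)} = \bind{\multinomial[1]}{\dualdirdst(\upsilon)} = \sgnhypergeometric[1](\upsilon)$. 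Since $L = \|\upsilon\| \geq 1$ we have $1 \leq L$, so Theorem~\ref{SgnDirichletHypgeomThm} gives $\sgnhypergeometric[1](\upsilon) = \hypergeometric[1](\upsilon)$, and unfolding the hypergeometric formula for draws of size $1$ (using $\binom{\upsilon}{1\ket{i}} = \upsilon(i)$ and $\binom{L}{1} = L$) yields $\hypergeometric[1](\upsilon) = \sum_{i\in\supp(\upsilon)}\frac{\upsilon(i)}{L}\ket{i} = \flrn(\upsilon)$. Chaining these equalities finishes the proof. (Equivalently, one may mimic the proof of Theorem~\ref{SgnDirichletHypgeomThm}: from $\bind{\multinomial[L]}{\dualdirdst(\upsilon)} = 1\ket{\upsilon}$ and Proposition~\ref{DrawDstProp}~\eqref{DrawDstPropHgMn} with $K = 1$, one gets $\bind{\sample}{\dualdirdst(\upsilon)} = \bind{\hypergeometric[1]}{1\ket{\upsilon}} = \flrn(\upsilon)$.)

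All the computations here are routine; the only thing needing a little care is the support bookkeeping. Corollary~\ref{SgnDirichletHypgeomCor}~\eqref{SgnDirichletHypgeomCorVar} only provides $\int_{\simplex{n}} r_{i}\cdot\dualbasevector{\upsilon}(\vec{r})\intd\vec{r} = \flrn(\upsilon)(i)$ for $i \in \supp(\upsilon)$, so a proof going directly through that corollary would additionally have to show this integral vanishes for $i \notin \supp(\upsilon)$. The route via $\multinomial[1]$ and Theorem~\ref{SgnDirichletHypgeomThm} sidesteps this, since $\hypergeometric[1]$ assigns weight $0$ off the support automatically; I expect the main (minor) obstacle to be stating the identification $\Mlt[1](\finset{n}) \cong \finset{n}$ precisely enough to transport the bind $\bind{(-)}{\dualdirdst(\upsilon)}$ along it.
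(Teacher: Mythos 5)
Your proof is correct but takes a genuinely different route from the paper in both items, and is arguably cleaner.

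For item~\eqref{SgnDirichletPropZero}, the paper goes back to the matrix machinery of Section~\ref{DualBasisSec}: for $K=0$ the factorial-sum matrix $\mathsl{FS}$ is the $1\times 1$ matrix $(1)$, so the formula~\eqref{DualBasisCoefficientEqn} gives $c_{\zero,\zero} = (n-1)!$ directly and hence $\dualbasevector{\zero}$ is the constant $(n-1)!$. You instead invoke Proposition~\ref{DualBaseSumProp} with $K=0$, which degenerates to the single term $\dualbasevector{\zero} = \frac{(n-1)!}{0!} = (n-1)!$. Both are correct; yours is a little slicker because it trades an explicit look at the coefficients for reuse of an already-established constant-sum property, at the cost of depending on Proposition~\ref{DualBaseSumProp}. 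For item~\eqref{SgnDirichletPropSample}, the paper simply cites Corollary~\ref{SgnDirichletHypgeomCor}~\eqref{SgnDirichletHypgeomCorVar} and says to mimic the proof of Theorem~\ref{DirichletThm}~\eqref{DirichletThmFlrn}. You instead identify $\sample$ with $\multinomial[1]$ under $\Mlt[1](\finset n)\cong\finset n$, so $\bind{\sample}{\dualdirdst(\upsilon)} = \sgnhypergeometric[1](\upsilon)$, and then Theorem~\ref{SgnDirichletHypgeomThm} (with $1\le L$, which holds since $\flrn(\upsilon)$ requires $\upsilon$ non-empty) gives $\sgnhypergeometric[1](\upsilon)=\hypergeometric[1](\upsilon)=\flrn(\upsilon)$. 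Your concern about the corollary only covering $i\in\supp(\upsilon)$ is a fair observation: the paper's one-liner leaves the $i\notin\supp(\upsilon)$ case implicit (it does follow, since $\hypergeometric[1](\upsilon)(1\ket{i})=0$ there, which is exactly what the proof of the corollary computes), whereas your $\multinomial[1]$-route handles it automatically. Net effect: the student proof is a valid and somewhat more self-contained alternative.
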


	\begin{proof}
		\begin{enumerate}
			\item For $K=0$ the set $\Mlt[K](\finset{n})$ of multisets of size $0$
			contains the empty multiset $\zero$ as sole element. The associated
			factor sum matrix $\mathsl{FS}$ from~\eqref{DualBasisCoefficientEqn}
			is thus the singleton matrix $(1)$, with $(1)$ as inverse. Hence the
			only coefficient $c_{\zero,\zero}$ of the polynomial
			$\dualbasevector{\zero}$ in~\eqref{DualBasisCoefficientEqn} is $(n-1)!$.
			This makes $\dualbasevector{\zero}$ the constant function $\vec{r} \mapsto
			(n-1)!$, which is the density of the uniform measure $\dirdst(\one)$
			on $\simplex{n}$.
			
			\item Using
			Corollary~\ref{SgnDirichletHypgeomCor}~\eqref{SgnDirichletHypgeomCorVar},
			the reasoning is precisely as in the proof of
			Theorem~\ref{DirichletThm}~\eqref{DirichletThmFlrn}. \QED
		\end{enumerate}
	\end{proof}

Theorem~\ref{SgnDirichletHypgeomThm} says that the signed
hypergeometric distribution is a `conservative' extension of the
ordinary hypergeometric distribution in the sense that these
distributions coincide for draws of size below the size of the urn. We
now illustrate draws of arbitrary size, also bigger than the size of
the urn.  The physical interpretation of such overdraws is
unclear. But mathematically all works well.
	
We continue with some basic properties of signed hypergeometric
distributions, as analogues of (some of the items of)
Proposition~\ref{DrawDstProp}.

	\begin{proposition}
          \label{SgnHypgeomProp}
		\begin{enumerate}
			\item $\sgnhypergeometric[K](\zero)$ is the uniform distribution on
			$\Mlt[K](\finset{n})$;
			
			\item $\flrn \klafter \sgnhypergeometric[K] = \flrn$;
			
			\item $\sgnhypergeometric[K] \klafter \multinomial[L\shortplus K] =
			\multinomial[K]$;
			
			\item $\sgnhypergeometric[K] \klafter \sgnhypergeometric[K\shortplus L] =
			\sgnhypergeometric[K]$;
			
			\item $\drawdelete \klafter \sgnhypergeometric[K\shortplus 1] =
			\sgnhypergeometric[K]$.
		\end{enumerate}
	\end{proposition}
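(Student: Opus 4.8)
The plan is to lean entirely on the defining factorisation $\sgnhypergeometric[K] = \multinomial[K] \klafter \dualdirdst$, together with the ordinary draw-distribution identities of Proposition~\ref{DrawDstProp}, the Dirichlet facts of Theorem~\ref{DirichletThm}, the basic dual-Dirichlet facts of Proposition~\ref{SgnDirichletProp}, and the conservativity Theorem~\ref{SgnDirichletHypgeomThm}. Each of the five clauses then collapses to a short rewrite.

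For clause~(1) I would compute $\sgnhypergeometric[K](\zero) = \bind{\multinomial[K]}{\dualdirdst(\zero)}$, use Proposition~\ref{SgnDirichletProp}~\eqref{SgnDirichletPropZero} to replace $\dualdirdst(\zero)$ by the uniform measure $\dirdst(\one)$ on $\simplex{n}$, apply Theorem~\ref{DirichletThm}~\eqref{DirichletThmPol} to rewrite this as $\polya[K](\one)$, and conclude by the elementary computation $\big(\binom{\one}{\varphi}\big) = \prod_{i}\big(\binom{1}{\varphi(i)}\big) = 1$, so that $\polya[K](\one)$ assigns weight $1/\bibinom{n}{K}$ to every $\ket{\varphi}$, i.e.\ it is the uniform distribution on $\Mlt[K](\finset{n})$. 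Clause~(2) is just $\flrn \klafter \sgnhypergeometric[K] = \flrn \klafter \multinomial[K] \klafter \dualdirdst = \sample \klafter \dualdirdst = \flrn$, using Proposition~\ref{DrawDstProp}~\eqref{DrawDstPropFlrnMn} for the middle step and Proposition~\ref{SgnDirichletProp}~\eqref{SgnDirichletPropSample} for the last.

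Clause~(3) is where the no-overdraw hypothesis enters: the channel $\multinomial[L\shortplus K]$ produces urns of size $L\shortplus K \geq K$, so on its output Theorem~\ref{SgnDirichletHypgeomThm} identifies $\sgnhypergeometric[K]$ with $\hypergeometric[K]$, whence $\sgnhypergeometric[K] \klafter \multinomial[L\shortplus K] = \hypergeometric[K] \klafter \multinomial[L\shortplus K] = \multinomial[K]$ by Proposition~\ref{DrawDstProp}~\eqref{DrawDstPropHgMn}. Clause~(4) then follows from clause~(3) by associativity and the definition: $\sgnhypergeometric[K] \klafter \sgnhypergeometric[K\shortplus L] = \sgnhypergeometric[K] \klafter \multinomial[K\shortplus L] \klafter \dualdirdst = \multinomial[K] \klafter \dualdirdst = \sgnhypergeometric[K]$. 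Clause~(5) is the analogous one-liner, using the draw-delete identity $\drawdelete \klafter \sgnhypergeometric[K\shortplus 1] = \drawdelete \klafter \multinomial[K\shortplus 1] \klafter \dualdirdst = \multinomial[K] \klafter \dualdirdst = \sgnhypergeometric[K]$, by Proposition~\ref{DrawDstProp}~\eqref{DrawDstPropDDMn}.

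There is no real obstacle; the only point requiring attention is the bookkeeping of domains in clause~(3) (and hence in clause~(4), which is reduced to it). The channel $\sgnhypergeometric[K]$ is defined on all of $\Mlt(\finset{n})$ but agrees with $\hypergeometric[K]$ only on urns of size at least $K$; fortunately the urns produced by $\multinomial[L\shortplus K]$ have size $L\shortplus K\geq K$, which is exactly the regime in which Theorem~\ref{SgnDirichletHypgeomThm} applies, so that the reduction to $\hypergeometric[K]$ and then to Proposition~\ref{DrawDstProp}~\eqref{DrawDstPropHgMn} is legitimate. Clauses~(1), (2) and~(5) need no such care, since they reduce straight through the factorisation $\sgnhypergeometric[K] = \multinomial[K]\klafter\dualdirdst$ to the corresponding multinomial/P\'olya identities — indeed clause~(1), being the genuinely overdrawn case, is the whole reason the signed extension is interesting.
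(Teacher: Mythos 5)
Your proof is correct and follows the paper's own argument essentially line by line: clause by clause you invoke the same factorisation $\sgnhypergeometric[K] = \multinomial[K]\klafter\dualdirdst$, the same supporting results (Proposition~\ref{SgnDirichletProp}, Theorem~\ref{SgnDirichletHypgeomThm}, Proposition~\ref{DrawDstProp}), and the same reductions. The only delta is your explicit verification in clause~(1) that $\polya[K](\one)$ is uniform via $\big(\binom{\one}{\varphi}\big)=1$, which the paper instead delegates to a pointer into Theorem~\ref{DirichletThm}.
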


	The last equation shows that the signed hypergeometric form a
        cone for the draw-delete maps and thus fit in a categorial
        approach to `De Finetti', following~\cite{JacobsS20}. The
        equation $\hypergeometric[K] \klafter \drawdelete =
        \hypergeometric[K]$ from
        Proposition~\ref{DrawDstProp}~\eqref{DrawDstPropHgDD} holds
        for ordinary hypergeometric, but its analogue for signed
        hypergeometric fails.

\begin{proof} 
\begin{enumerate}
\item Via
  Proposition~\ref{SgnDirichletProp}~\eqref{SgnDirichletPropZero}:
  $\sgnhypergeometric[K](\zero) =
  \bind{\multinomial[K]}{\dualdirdst(\zero)} =
  \bind{\multinomial[K]}{\dirdst(\one)} = \polya[K](\one)$. The latter
  P\'olya distribution on $\Mlt[K](\finset{n})$ is uniform, see
  Theorem~\ref{DirichletThm}.
			
\item By combining
  Proposition~\ref{DrawDstProp}~\eqref{DrawDstPropFlrnMn} with
  Proposition~\ref{SgnDirichletProp}~\eqref{SgnDirichletPropSample} we
  get: $\flrn \klafter \sgnhypergeometric[K] = \flrn \klafter
  \multinomial[K] \klafter \dualdirdst = \sample \klafter \dualdirdst
  = \flrn$.
			
\item In the composite $\sgnhypergeometric[K] \klafter
  \multinomial[L\shortplus K]$ the signed hypergeometric draws of size
  $K$ are applied to the urns that appear as draws of size
  $L\shortplus K$ coming out of the multinomial
  $\multinomial[L\shortplus K]$. Hence the draw size is less than the
  urn size, so Theorem~\ref{SgnDirichletHypgeomThm} applies, and the
  signed hypergeometric is an ordinary hypergeometric. Thus
  Proposition~\ref{DrawDstProp}~\eqref{DrawDstPropHgMn} gives:
  $\sgnhypergeometric[K] \klafter \multinomial[L\shortplus K] =
  \hypergeometric[K] \klafter \multinomial[L\shortplus K] =
  \multinomial[K]$.
			
\item By the previous point: $\sgnhypergeometric[K] \klafter
  \sgnhypergeometric[L\shortplus K] = \sgnhypergeometric[K] \klafter
  \multinomial[L\shortplus K] \klafter \dualdirdst = \multinomial[K]
  \klafter \dualdirdst = \sgnhypergeometric[K]$.
			
\item Via Proposition~\ref{DrawDstProp}~\eqref{DrawDstPropDDMn}:
  $\drawdelete \klafter \sgnhypergeometric[K\shortplus 1] =
  \drawdelete \klafter \multinomial[K\shortplus 1] \klafter
  \dualdirdst = \multinomial[K] \klafter \dualdirdst =
  \sgnhypergeometric[K]$. \QED
\end{enumerate}
\end{proof}

	\section{Signed hypergeometric channels as Bayesian inversion}\label{DaggerSec}
	
	This section shows that the signed hypergeometric channel
        $\sgnhypergeometric[K] \colon \Mlt(\finset{n}) \rightarrow
        \Sgn\big(\Mlt[K](\finset{n})\big)$ can be obtained as dagger,
        that is as Bayesian inversion,
        see~\cite{ClercDDG17,ChoJ19}. For this we need the following
        new signed distribution, that builds on the result from
        Proposition~\ref{DualBaseSumProp} that the sum of dual basis
        functions is constant.
	
	\begin{definition}
		\label{DualBaseDstDef}
		For a distribution $\omega\in\Dst(\finset{n})$ we define signed
		\emph{dual multinomial} distribution $\dualbase[K](\omega) \in
		\Sgn\big(\Mlt[K](\finset{n})\big)$ as:
		\[ \begin{array}{rcl}
			\dualbase[K](\omega)
			& \coloneqq &
			\displaystyle\sum_{\varphi \in \Mlt[K](\finset n)} 
			\frac{K!}{(K \shortplus n \shortminus 1)!} \cdot 
			\dualbasevector{\varphi}(\omega) \, \bigket{\varphi}.
		\end{array} \]
		
		\noindent We thus get a `signed' channel $\dualbase[K] \colon \Dst(\finset{n})
		\rightarrow \Sgn\big(\Mlt(\finset{n})\big)$.
	\end{definition}

	For instance:
	\[ \begin{array}{rcl}
		\lefteqn{\textstyle\dualbase[2]\Big(\frac{1}{2}\ket{0} + \frac{1}{6}\ket{1} + 
			\frac{1}{3}\ket{2}\Big)}
		\\[+0.4em]
		& = &
		-\frac{1}{4}\Bigket{2\ket{0}} + \frac{1}{6}\Bigket{1\ket{0} + 1\ket{1}} 
		-\frac{1}{4}\Bigket{2\ket{1}} +
                \frac{11}{6}\Bigket{1\ket{0} + 1\ket{2}} + 
		\frac{1}{6}\Bigket{1\ket{1} + 1\ket{2}} - \frac{2}{3}\Bigket{2\ket{2}}.
	\end{array} \]
	
	\noindent We have no (operational) interpretation for these distributions, but they do make sense mathematically, as in the next result.
	
	\begin{theorem}
		\label{SgnHypergeomDaggerThm}
		\begin{enumerate}
			\item There is an equality of string diagrams
                          which involves swapping dual and ordinary
                          Dirichlet distributions
			\[ \hbox{\tikzfig{Dirichlet-Swap}} \]
			
			\item The signed hypergeometric channel $\sgnhypergeometric[L] \colon
			\Mlt[K](\finset{n}) \rightarrow \Sgn\big(\Mlt[L](\finset{n})\big)$
			is the dagger of the composite $\dualbase[K] \klafter
			\dirdst(\one\shortplus-) \colon \Mlt[L](\finset{n}) \rightarrow
			\Sgn\big(\Mlt[K](\finset{n})\big)$ with the uniform distribution
			$\uniform[{\Mlt[L](\finset{n})}]$ as prior. In a formula:
			\[ \begin{array}{rcl}
				\sgnhypergeometric[L]
				& = &
				\Big(\dualbase[K] \klafter \dirdst(\one\shortplus-)\Big)^{\dag}_{\uniform[{\Mlt[L](\finset{n})}]}.
			\end{array} \]
		\end{enumerate}
	\end{theorem}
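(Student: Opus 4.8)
The plan is to prove the numerical identity in item~(2) by evaluating both sides of the defining equation for a Bayesian inverse, and to view the string-diagram equation in item~(1) as the pictorial packaging of the same bookkeeping. Throughout I would use the joint-state characterisation of the dagger: a (signed) channel $e$ equals $c^{\dag}_{\alpha}$ for $c\colon A\rightarrow\Sgn(B)$ with prior $\alpha\in\Dst(A)$ exactly when the joint states $\bind{\tuple{\idmap,c}}{\alpha}$ on $A\times B$ and the swap of $\bind{\tuple{\idmap,e}}{(\bind{c}{\alpha})}$ on $B\times A$ coincide. One point to flag at the outset is that this is being applied to \emph{signed} channels, where a priori Bayesian inversion need neither exist nor be unique; here it is unproblematic precisely because every marginal that arises below turns out to be the honest uniform distribution, hence nowhere zero, so no division by a vanishing (or negative) weight is performed.

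The computational core is a reformulation of the Dirichlet conjugate-prior relation: for $\upsilon\in\Mlt[L](\finset{n})$ one has, directly from Definition~\ref{DirichletDef}~\eqref{DirichletDefDens} and $\multinomialvector{\upsilon}(\vec{r}) = \coefm{\upsilon}\cdot\vec{r}^{\upsilon}$, the identity $\dirden(\one\shortplus\upsilon)(\vec{r}) = \tfrac{(n\shortplus L\shortminus 1)!}{L!}\cdot\multinomialvector{\upsilon}(\vec{r})$. Feeding this and Definition~\ref{DualBaseDstDef} into the continuous-bind equation~\eqref{ContinuousBindEqn} gives, for $\upsilon\in\Mlt[L](\finset{n})$ and $\varphi\in\Mlt[K](\finset{n})$,
\[ \begin{array}{rcl}
\big(\dualbase[K] \klafter \dirdst(\one\shortplus-)\big)(\upsilon)(\varphi)
& = &
\displaystyle \frac{K!\,(n\shortplus L\shortminus 1)!}{(K\shortplus n\shortminus 1)!\cdot L!}\cdot
\inprod{\multinomialvector{\upsilon}}{\dualbasevector{\varphi}},
\end{array} \]
while unfolding Definition~\ref{DualDirichletDef} gives $\sgnhypergeometric[L](\varphi)(\upsilon) = \inprod{\multinomialvector{\upsilon}}{\dualbasevector{\varphi}}$ outright. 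Multiplying the displayed quantity by $\uniform[{\Mlt[L](\finset{n})}](\upsilon) = 1/\bibinom{n}{L}$ and $\sgnhypergeometric[L](\varphi)(\upsilon)$ by $\uniform[{\Mlt[K](\finset{n})}](\varphi) = 1/\bibinom{n}{K}$, and using $1/\bibinom{n}{L} = L!\,(n\shortminus 1)!/(n\shortplus L\shortminus 1)!$, both collapse to the single symmetric quantity $\inprod{\multinomialvector{\upsilon}}{\dualbasevector{\varphi}}/\bibinom{n}{K}$. Hence the two joint signed states on $\Mlt[L](\finset{n})\times\Mlt[K](\finset{n})$ agree.

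It then remains to check that the $\Mlt[K](\finset{n})$-marginal of this joint is literally $\uniform[{\Mlt[K](\finset{n})}]$, so that the identity really exhibits $\sgnhypergeometric[L]$ as the dagger and not merely as a proportional channel. Summing $\inprod{\multinomialvector{\upsilon}}{\dualbasevector{\varphi}}/\bibinom{n}{K}$ over $\upsilon$ and using that the $\multinomialvector{\upsilon}$, $\upsilon\in\Mlt[L](\finset{n})$, form a partition of unity, together with $\int_{\simplex{n}}\dualbasevector{\varphi} = 1$ from Proposition~\ref{DualBaseSgnDstProp}, yields the constant $1/\bibinom{n}{K}$; the symmetric $\Mlt[L](\finset{n})$-marginal check uses Proposition~\ref{DualBaseSumProp} and $\int_{\simplex{n}}\multinomialvector{\upsilon} = L!/(n\shortplus L\shortminus 1)!$ instead. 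This finishes item~(2): $\sgnhypergeometric[L] = \big(\dualbase[K] \klafter \dirdst(\one\shortplus-)\big)^{\dag}_{\uniform[{\Mlt[L](\finset{n})}]}$.

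For item~(1) I would establish the pictured equation by expanding both string diagrams to integrals over $\simplex{n}$ and matching them with exactly the same four ingredients: the dual-basis relation~\eqref{MHPDualInprodEqn}, the conjugate-prior identity above, the partition-of-unity of the $\multinomialvector{\upsilon}$, and Proposition~\ref{DualBaseSumProp}. Once that swap diagram is available, item~(2) can also be read off abstractly: $\dualbase[K]\klafter\dirdst(\one\shortplus-)$ is a Kleisli composite through $\simplex{n}$, so its dagger factors as $\dirdst(\one\shortplus-)^{\dag}_{\uniform[{\Mlt[L](\finset{n})}]} \klafter \dualbase[K]^{\dag}_{\uniform[\simplex{n}]}$ --- the intermediate prior on $\simplex{n}$ being uniform, by the partition-of-unity computation --- and the two half-daggers come out as $\multinomial[L]$ (this is essentially the multinomial--Dirichlet dagger duality of Theorem~\ref{DirichletThm}~\eqref{DirichletThmDag}) and $\dualdirdst$ (a short direct computation: $\dualbase[K]^{\dag}_{\uniform[\simplex{n}]}$ at input $\varphi$ has density $\propto\dualbasevector{\varphi}$, normalised to $1$ by Proposition~\ref{DualBaseSgnDstProp}), whose composite is $\multinomial[L]\klafter\dualdirdst = \sgnhypergeometric[L]$. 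I expect the only genuine obstacle to be conceptual rather than computational: making sure Bayesian inversion is legitimate in the signed setting, which --- as noted --- reduces to the observation that all marginals in play are honest uniform distributions.
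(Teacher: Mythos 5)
The proposal is correct and follows the paper's own route: compute the two joint signed states on $\Mlt[K](\finset{n})\times\Mlt[L](\finset{n})$ --- reducing $\dirden(\one\shortplus\upsilon)$ to a scalar multiple of $\multinomialvector{\upsilon}$ on one side and unfolding $\sgnhypergeometric[L]=\multinomial[L]\klafter\dualdirdst$ on the other --- and observe that both collapse to the symmetric quantity $\inprod{\multinomialvector{\upsilon}}{\dualbasevector{\varphi}}/\bibinom{n}{K}$, which is exactly the content of the swap diagram and of the dagger claim. Your additional remarks (that the marginal is the honest, nowhere-vanishing uniform distribution, which licenses Bayesian inversion in the signed setting, and the abstract Kleisli factorization of the dagger through $\simplex{n}$) are correct and slightly more careful than the paper's brief treatment, but they do not change the strategy.
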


	\begin{proof}
		\begin{enumerate}
			\item Let $\varphi\in\Mlt[K](\finset{n})$ and
			$\psi\in\Mlt[L](\finset{n})$.
			\[ \hspace*{-2em}\begin{array}{rcl}
			  \Big(\bind{\tuple{\dualbase[K] \klafter 
							\dirdst(\one\shortplus-), \idmap}}
					{\uniform[{\Mlt[L](\finset{n})}]}\Big)(\varphi,\psi)
				& = &
				\displaystyle\frac{1}{\bibinom{n}{L}}\cdot \int_{\vec{r}}
				\frac{K!}{(K \shortplus n \shortminus 1)!} \cdot 
				\dualbasevector{\varphi}(\omega) \cdot 
				\frac{(L\shortplus n \shortminus 1)!}{\facto{\psi}} \cdot 
				\vec{r}^{\psi} \intd\vec{r}
				\\[+1.4em]
				& = &
				\displaystyle\frac{K!\cdot (n\shortminus 1)!}{(K \shortplus n \shortminus 1)!} 
				\cdot \int_{\vec{r}} \dualbasevector{\varphi}(\omega) \cdot 
				\frac{L!}{\facto{\psi}} \cdot \vec{r}^{\psi} \intd\vec{r}
				\\[+1.0em]
				& = &
				\displaystyle\frac{1}{\bibinom{n}{K}}\cdot \int_{\vec{r}}
				\dualbasevector{\varphi}(\vec{r}) \cdot 
				\multinomialvector{\psi}(\vec{r}) \intd\vec{r}
				\\[+1.2em]
				& = &
				\displaystyle\frac{1}{\bibinom{n}{K}}\cdot 
				\Big(\bind{\multinomial[L]}{\dualdirdst(\varphi)}\Big)(\psi)
				\\[+1.0em]
				& = &
				\Big(\bind{\tuple{\idmap, \multinomial[L] \klafter \dualdirdst}}
				{\uniform[{\Mlt[K](\finset{n})}]}\Big)(\varphi,\psi).
			\end{array} \]
			
			\item This is a reformulation of the previous
                          point, using that $\sgnhypergeometric[L] =
                          \multinomial[L] \klafter \dualdirdst$,
                          occurring in the above string diagram on the
                          left of the equation. \QED
		\end{enumerate}
	\end{proof}

	In Definition~\ref{DualDirichletDef} we have introduced the signed
	hypergeometric $\sgnhypergeometric[L]$ as $\multinomial[L] \klafter
	\dualdirdst$ via the dual Dirichlet distribution $\dualdirdst$. The
	above result tells that we can also obtain $\sgnhypergeometric[L]$ as
	dagger from ordinary Dirichlet $\dirdst$ (plus the dual multinomial
	distribution $\dualbasename$). This diagrammatic description of the
	dagger coincides with the one on right in Figure~\ref{DirichletFig}.

	\section{Conclusions and further work}\label{ConclusionSec}
	
	This paper covers a fascinating topic, namely negative
        probabilities.  It does not offer operational meaning, but it
        does provide a solid mathematical basis for the emergence of
        negative probabilities in classical, non-quantum probability
        theory. The techniques of categorical probability provide the
        toolbox for describing the relevant properties.
	
	There is plenty of further work. High on our list is an explicit
	formula for the dual basis vectors in the general multivariate case
	(like in the bivariate case). Also, we would like to develop a deeper
	understanding of the conjugate situation described in the string
	diagrams in Theorem~\ref{SgnHypergeomDaggerThm}.

\bibliographystyle{eptcs}
\bibliography{refs.bib}

\appendix
\section*{Appendix}
\section{Examples}\label{ExampleAppendix}

We elaborate two examples: first we illustrate how to actually compute
a dual basis, and then how to compute signed hypergeometric
distributions.

\begin{example}
	\label{BaseCoefficientEx}
	We take $n=3$ and $K=2$ so that we have
	$\bibinom{n}{K} = \frac{4!}{2!\cdot 2!} = 6$ multisets
	in $\Mlt[2](\finset{3})$, namely: $2\ket{0}$,
	$\,1\ket{0} + 1\ket{1}$, $\,2\ket{1}$, $\,1\ket{0} +
	1\ket{2}$, $\,1\ket{1} + 1\ket{2}$,
	$\,2\ket{2}$. Using this order of multisets we get a
	$6\times 6$ matrix:
	\[ \begin{array}{rccclcrcl}
		\mathsl{FS}
		& = &
		\Big(\facto{(\varphi+\psi)}\!\Big)_{\!\varphi,\psi\in\Mlt[2](\finset{3})}
		& = &
		\left(\begin{matrix}
			24 & 6 & 4 & 6 & 2 & 4
			\\
			6 & 4 & 6 & 2 & 2 & 2
			\\
			4 & 6 & 24 & 2 & 6 & 4
			\\
			6 & 2 & 2 & 4 & 2 & 6
			\\
			2 & 2 & 6 & 2 & 4 & 6
			\\
			4 & 2 & 4 & 6 & 6 & 24
		\end{matrix}\right)
		& \quad\mbox{so}\quad &
		\mathsl{FS}^{-1}
		& = &
		\displaystyle\frac{1}{60}\left(\begin{matrix}
			6 & -8 & 1 & -8 & 2 & 1
			\\
			-8 & 44 & -8 & -6 & -6 & 2
			\\
			1 & -8 & 6 & 2 & -8 & 1
			\\
			-8 & -6 & 2 & 44 & -6 & -8
			\\
			2 & -6 & -8 & -6 & 44 & -8
			\\
			1 & 2 & 1 & -8 & -8 & 6
		\end{matrix}\right)
	\end{array} \]
	
	\noindent Notice that negative values appear in this
	inverse matrix, without clear pattern.
	
	We can now compute the dual basis vectors by
	combining~\eqref{DualBasisEqn}
	and~\eqref{DualBasisCoefficientEqn}.  We elaborate the
	case of the first multiset $2\ket{0}$. For
	$(r_{0},r_{1},r_{2}) \in \simplex{3}$, that is, for
	$r_{0},r_{1},r_{2}\in [0,1]$ with $r_{0} + r_{1} +
	r_{2} = 1$ we get $\dualbasevector{2\ket{0}} \in \HP 3
	2$ determined as:
	\[ \begin{array}{rcl}
		\lefteqn{\dualbasevector{2\ket{0}}(r_{0}, r_{1}, r_{2})}
		\\[+0.2em]
		& \smash{\stackrel{\eqref{DualBasisEqn}}{=}} &
		\displaystyle\sum_{\varphi\in\Mlt[2](\finset{3})} 
		c_{\varphi,2\ket{0}} \cdot (r_{0}, r_{1}, r_{2})^{\varphi}
		
		\hspace*{\arraycolsep}\smash{\stackrel{\eqref{DualBasisCoefficientEqn}}{=}}\hspace*{\arraycolsep}
		\displaystyle\sum_{\varphi\in\Mlt[2](\finset{3})} \textstyle
		\frac{6!}{\coefm{2\ket{0}}}\cdot 
		\Big(\mathsl{FS}^{-1}\Big)_{\!\varphi,2\ket{0}}
		\cdot r_{0}^{\varphi(0)}\cdot r_{1}^{\varphi(1)}\cdot r_{2}^{\varphi(2)}
		\\[+1.4em]
		& = &
		12\Big(6r_{0}^{2} -8r_{0}^{1}r_{1}^{1} + 1r_{1}^{2} 
		-8r_{0}^{1}r_{2}^{1} + 2r_{1}^{1}r_{2}^{1} + 1r_{2}^{2}\Big)
		\hspace*{\arraycolsep}=\hspace*{\arraycolsep}
		72r_{0}^{2} - 96r_{0}r_{1} + 12r_{1}^{2} - 96r_{0}r_{2} + 
		24r_{1}r_{2} + 12r_{2}^{2}.
	\end{array} \]
	
	\noindent Similarly, for the other multisets in $\Mlt[2](\finset{3})$,
	\[ \begin{array}{rcl}
		\dualbasevector{1\ket{0}+1\ket{1}}(r_{0}, r_{1}, r_{2})
		& = &
		-48r_{0}^{2} + 264r_{0}r_{1} - 48r_{1}^{2} - 36r_{0}r_{2} -
		36r_{1}r_{2} + 12r_{2}^{2}
		\\[+0.3em]
		\dualbasevector{2\ket{1}}(r_{0}, r_{1}, r_{2})
		& = &
		12r_{0}^{2} - 96r_{0}r_{1} + 72r_{1}^{2} + 24r_{0}r_{2} -
		96r_{1}r_{2} + 12r_{2}^{2}
		\\[+0.3em]
		\dualbasevector{1\ket{0}+1\ket{2}}(r_{0}, r_{1}, r_{2})
		& = &
		-48r_{0}^{2} - 36r_{0}r_{1} + 12r_{1}^{2} + 264r_{0}r_{2} -
		36r_{1}r_{2} - 48r_{2}^{2}
		\\[+0.3em]
		\dualbasevector{1\ket{1}+1\ket{2}}(r_{0}, r_{1}, r_{2})
		& = &
		12r_{0}^{2} - 36r_{0}r_{1} - 48r_{1}^{2} - 36r_{0}r_{2} +
		264r_{1}r_{2} - 48r_{2}^{2}
		\\[+0.3em]
		\dualbasevector{2\ket{2}}(r_{0}, r_{1}, r_{2})
		& = &
		12r_{0}^{2} + 24r_{0}r_{1} + 12r_{1}^{2} - 96r_{0}r_{2} -
		96r_{1}r_{2} + 72r_{2}^{2}.
	\end{array} \]
	
	\noindent Then indeed,
	$\inprod{\multinomialvector{\varphi}}{\dualbasevector{\psi}} =
	\delta_{\varphi,\psi}$ for all $\varphi,\psi\in\Mlt[2](\finset{3})$.
	
	We check the claim of Proposition~\ref{DualBaseSumProp}, that
        the sum of the dual base vectors is a particular
        constant. Let's abbreviate the (pointwise) sum of the above
        dual basis functions as: $\dualbasevector{} \coloneqq
        \sum_{\varphi\in\Mlt[2](\finset{3})}
        \dualbasevector{\varphi}$. Then, using the above descriptions,
        for $(r_{0}, r_{1}, r_{2}) \in \Delta^{3}$,
	\[ \begin{array}{rcl}
		\dualbasevector{}(r_{0}, r_{1}, r_{2})
		& = &
		12r_{0}^{2} + 24r_{0}r_{1} + 12r_{1}^{2} + 24r_{0}r_{2} +
		24r_{1}r_{2} + 12r_{2}^{2}
		\\[+0.3em]
		& = &
		12\Big((r_{0} + r_{1})^{2} + 2(r_{0}+r_{1})r_{2} + r_{2}^{2}\Big)
		\hspace*{\arraycolsep}=\hspace*{\arraycolsep}
		12\big(r_{0} + r_{1} + r_{2}\big)^{2}
		\hspace*{\arraycolsep}=\hspace*{\arraycolsep}
		12.
	\end{array} \]
	
	\noindent Hence the sum of the dual basis functions is indeed
        a constant and equals $\frac{(K+n-1)!}{K!} = \frac{4!}{2!} =
        12$.
\end{example}

\begin{example}
	\label{SgnHypergeomEx}
	Let's take as urn $\upsilon = 1\ket{0} + 1\ket{1} + 1\ket{2}$ with one
	ball of each color in $\finset{3} = \{0,1,2\}$. By drawing two balls
	we remain within the world of ordinary hypergeometric distributions,
	by Theorem~\ref{SgnDirichletHypgeomThm} and get the following
	distribution over draws.
	\[ \begin{array}{rcl}
		\sgnhypergeometric[2](\upsilon) =
		\hypergeometric[2](\upsilon)
		& = &
		\frac{1}{3}\bigket{1\ket{0} + 1\ket{1}} +
		\frac{1}{3}\bigket{1\ket{0} + 1\ket{2}} +
		\frac{1}{3}\bigket{1\ket{1} + 1\ket{2}}.
	\end{array} \]
	
	\noindent We can also draw three balls; in that case, the outcome is certain to be the whole urn:
	\[ \begin{array}{rcccccl}
		\sgnhypergeometric[3](\upsilon)
		&  = &
		\hypergeometric[3](\upsilon)
		& = &
		1\bigket{1\ket{0} + 1\ket{1} + 2\ket{1}}
		& = &
		1\bigket{\upsilon}.
	\end{array} \]
	
	\noindent Drawing four balls, more than in the urn, is only possible
	with the signed hypergeometric. It leads to negative probabilities in:
	\[ \begin{array}{rcl}
		\sgnhypergeometric[4](\upsilon)
		& = &
		\frac{7}{126}\bigket{4\ket{0}} -
		\frac{14}{126}\bigket{3\ket{0} + 1\ket{1}} -
		\frac{21}{126}\bigket{2\ket{0} + 2\ket{1}} -
		\frac{14}{126}\bigket{1\ket{0} + 3\ket{1}} +
		\frac{7}{126}\bigket{4\ket{1}} 
		\\[+0.2em]
		& & \; - \,
		\frac{14}{126}\bigket{3\ket{0} + 1\ket{2}} +
		\frac{84}{126}\bigket{2\ket{0} + 1\ket{1} + 1\ket{2}} +
		\frac{84}{126}\bigket{1\ket{0} + 2\ket{1} + 1\ket{2}} 
		\\[+0.2em]
		& & \; - \,
		\frac{14}{126}\bigket{3\ket{1} + 1\ket{2}} -
		\frac{21}{126}\bigket{2\ket{0} + 2\ket{2}} +
		\frac{84}{126}\bigket{1\ket{0} + 1\ket{1} + 2\ket{2}}
		\\[+0.2em]
		& & \;\; - \,
		\frac{21}{126}\bigket{2\ket{1} + 2\ket{2}} -
		\frac{14}{126}\bigket{1\ket{0} + 3\ket{2}} -
		\frac{14}{126}\bigket{1\ket{1} + 3\ket{2}} +
		\frac{7}{126}\bigket{4\ket{2}}.
	\end{array} \]
	
	
	\noindent There is no apparent `logic' in these probabilities, for
	instance in terms of draw probabilities. In particular, an intuitive
	explanation is missing for why certain probabilities are negative.
	But, as explained above, these distributions are constructed in a
	systematic manner, via dual bases.
	
	Clarification: the outcomes of the signed hypergeometric given
        above are obtained via elementary Python scripts that perform
        matrix inversion --- as in~\eqref{DualBasisCoefficientEqn} ---
        to obtain representations of dual basis vectors and to perform
        integration over simplices. Our Python scripts produce real
        numbers as probabilities, but they are so close to the above
        fractions in $\sgnhypergeometric[4](\upsilon)$ that we write
        these fractions instead, for the sake of readability.
\end{example}

\section{Missing proofs}\label{ProofAppendix}

In the body of the article we left out the proofs of several basis
properties of ordinary draws.

\begin{proof} (of Theorem~\ref{DirichletThm})
	\begin{enumerate}
		\item Via~\eqref{ContinuousBindEqn}, for arbitrary
		$\varphi\in\Mlt[K](\finset{n})$,
		\[ \begin{array}{rcl}
			\big(\bind{\multinomial[K]}{\dirdst(\upsilon)}\big)(\varphi)			& = &
			\displaystyle\int_{\vec{r}\in\simplex{n}} \textstyle
			\multinomial[K](\vec{r})(\varphi) \cdot
			\dirden(\upsilon)(\vec{r}) \intd\vec{r}
			\\[+1em]
			& = &
			\displaystyle\int_{\vec{r}\in\simplex{n}}
			\coefm{\varphi} \cdot \prod_{i\in\finset{n}} r_{i}^{\varphi(i)} \cdot
			\frac{(L\shortminus 1)!}{\facto{(\upsilon\shortminus \one)}} \cdot
			\prod_{i\in\finset{n}} \, r_{i}^{\upsilon(i)-1} \intd\vec{r}
			\\[+1.2em]
			& = &
			\displaystyle\int_{\vec{r}\in\simplex{n}}
			\frac{K!\cdot (L\shortminus 1)!}
			{\facto{\varphi} \cdot \facto{(\upsilon\shortminus\one)}} \cdot
			\prod_{i\in\finset{n}} \, r_{i}^{\upsilon(i)+\varphi(i)-1} \intd\vec{r}
			\\[+1.2em]
			& = &
			\displaystyle\frac{\big(\binom{\upsilon}{\varphi}\big)}
			{\big(\binom{L}{K}\big)} \cdot \int_{\vec{r}\in\simplex{n}}
			\frac{(L\shortplus K\shortminus 1)!}
			{\facto{(\upsilon\shortplus \varphi\shortminus \one)}} \cdot
			\prod_{i\in\finset{n}} \, r_{i}^{(\upsilon+\varphi)(i)-1} \intd\vec{r}
			\\[+1.2em]
			& \smash{\stackrel{\eqref{DirichletNormalisationEqn}}{=}} &
			\polya[K](\upsilon)(\varphi).
		\end{array} \]
		
		\item This can be computed in a similar manner, but the details are
		beyond the scope of this paper.
		
		\item We have already seen that the set $\Mlt[K](\finset{n})$ has
		$\bibinom{n}{K}$ elements, so that the uniform distribution
		$\uniform[{\Mlt[K](\finset{n})}]$ on this set is
		$\sum_{\varphi\in\Mlt[K](\finset{n})}
		\frac{1}{\bibinom{n}{K}}\bigket{\varphi}$. This uniform distribution
		equals $\polya[K](\one)$, where $\one = \sum_{i\in\finset{n}}
		1\ket{i}$. Hence we can reason diagrammatically, as on the right in
		Figure~\ref{DirichletFig}. The uniform distribution
		$\uniform[\Dst(\finset{n})]$ on the left in this chain of equations
		is $\dirdst(\one)$ on $\simplex{n}$, with density $\vec{r} \mapsto
		(n-1)!$, since $\int_{\vec{r}\in\simplex{n}} 1\intd\vec{r} =
		\frac{1}{(n-1)!}$ by~\eqref{DirichletNormalisationEqn}.
		
		\item Consider an urn $\upsilon$ of size $L$ and a number
		$j\in\finset{n}$. We write $\upsilon_{j} \coloneqq \upsilon +
		1\ket{j}$ of size $L+1$. Then, using~\eqref{ContinuousBindEqn},
		\[ \begin{array}[b]{rcl}
		  \big(\bind{\sample}{\dirdst(\upsilon)}\big)(j)
			& = &
			\displaystyle\int_{\vec{r}\in\simplex{n}} r_{j} \cdot 
			\frac{(L\shortminus 1)!}{\facto{(\upsilon\shortminus\one)}}\cdot 
			\prod_{i\in\finset{n}} r_{i}^{\upsilon(i)-1} \intd \vec{r}
			\\[+1.2em]
			& = &
			\displaystyle \frac{\upsilon(j)}{L} \cdot \int_{\vec{r}\in\simplex{n}} 
			\frac{L!}{\facto{(\upsilon_{j}\shortminus\one)}}\cdot 
			\prod_{i\in\finset{n}} r_{i}^{\upsilon_{j}(i)-1} \intd \vec{r}
			\\[+0.8em]
			& \smash{\stackrel{\eqref{DirichletNormalisationEqn}}{=}} &
			\displaystyle \frac{\upsilon(j)}{L} \cdot 1
			\\[+0.5em]
			& = &
			\flrn(\upsilon).
		\end{array} \eqno{\blacktriangleleft} \]
	\end{enumerate}
\end{proof}

\section{The bivariate case}\label{BivariateAppendix}
	
	\begin{figure*}[th]
		\label{BivariateSignedHypergeometricFig}
	\end{figure*}
	
	In the previous sections we have elaborated the multivariate case,
	involving multiple variables. The bivariate case, for $n=2$, can be
	done slightly differently, using the isomorphisms $\Dst(\finset{2})
	\cong [0,1]$ and $\Mlt[K](\finset{2}) \cong \{0,1,\ldots,K\} \cong
	\finset{K\shortplus 1}$. The binomial channel $\binomial[K] \colon
	[0,1] \chanto \{0,\ldots,K\}$ is thus related via the general
	multinomial one via the following square.
	\[ \xymatrix@R-0.8pc{
		[0,1]\ar[rr]|-{\circ}^-{\binomial[K]}\ar[d]^{\cong} & & \{0,\ldots,K\}
		\\
		\Dst(\finset{2})\ar[rr]|-{\circ}^-{\multinomial[K]} & & 
		\Mlt[K](\finset{2})\ar[u]^-{\cong}
	} \]
	
	\noindent Explicitly, for $r\in [0,1]$, we have
	\[ \begin{array}{rcl}
		\binomial[K](r)
		& = &
		\displaystyle\sum_{0\leq i\leq K} \binom{K}{i}\cdot r^{i} \cdot (1-r)^{K-i}
		\,\bigket{i}.
	\end{array} \]
	
	\noindent The polynomials involved are known as \emph{Bernstein polynomials}, namely
	\[ \begin{array}{rcl}
		\binomialvector{i}(r)
		\hspace*{\arraycolsep}\coloneqq\hspace*{\arraycolsep}
		\displaystyle \binom{K}{i}\cdot r^{i} \cdot (1\shortminus r)^{K-i}
		& = &
		\displaystyle\sum_{0\leq j\leq K-i} \binom{K}{i}\cdot \binom{K\shortminus i}{j}
		\cdot r^{i} \cdot 1^{j} \cdot (-r)^{K-i-j}
		\\
		& = &
		\displaystyle\sum_{0\leq j\leq K-i} \binom{K}{i}\cdot \binom{K\shortminus i}{j}
		\cdot (-1)^{K-i-j} \cdot r^{K-j}.
	\end{array} \]
	
	\noindent These polynomials are widely studied in Computer Graphics
	and Computer Aided Geometric Design~\cite{hoschek1993fundamentals},
	but also in areas such as approximation
	theory~\cite{lorentz2013bernstein} and probability. They appear not
	only as probability mass functions for the binomial distributions (as
	described above), but also as density function of the (continuous)
	Beta distributions, rescaled by a normalisation factor.
	
	The Hilbert space $\HP{2}{K}$ can be identified with the space of univariate polynomials, as functions $[0,1] \rightarrow \R$, spanned by
	the monomial basis $(r^i : 0 \leq i \leq K)$. The $(n\shortplus
	1)\times(n\shortplus 1)$ matrix $B$ that represents the above
	polynomials $\binomialvector{i}$ in this basis has entries
	\[ \begin{array}{rcl}
		B_{i,K-j}
		& = &
		\displaystyle \binom{K}{i}\cdot \binom{K\shortminus i}{j} \cdot (-1)^{K-i-j}.
	\end{array} \]
	
	\noindent Below we plot several Bernstein polynomials (on the left)
	and their dual bases (on the right).
	\[ \includegraphics[scale=0.3]{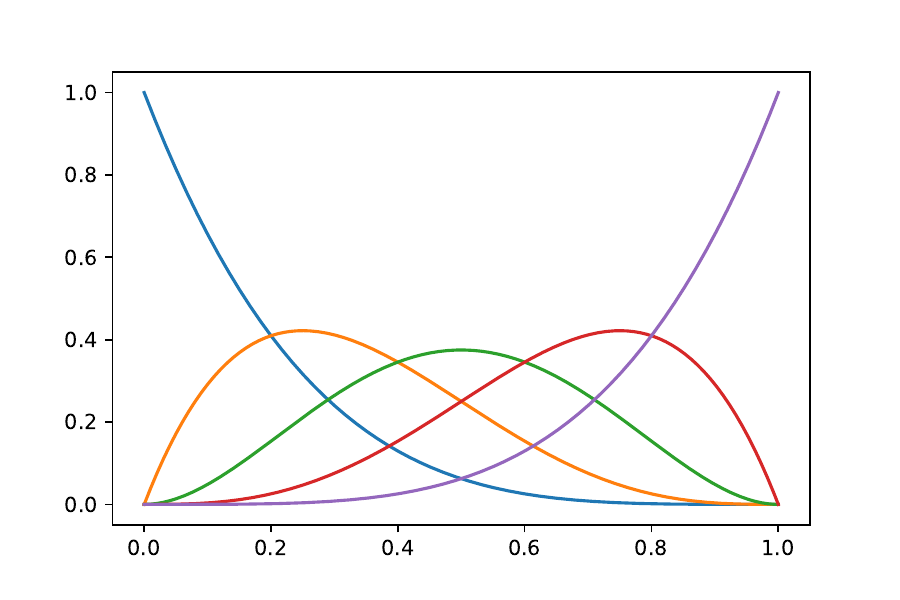} 
        \hspace*{6em}
	\includegraphics[scale=0.3]{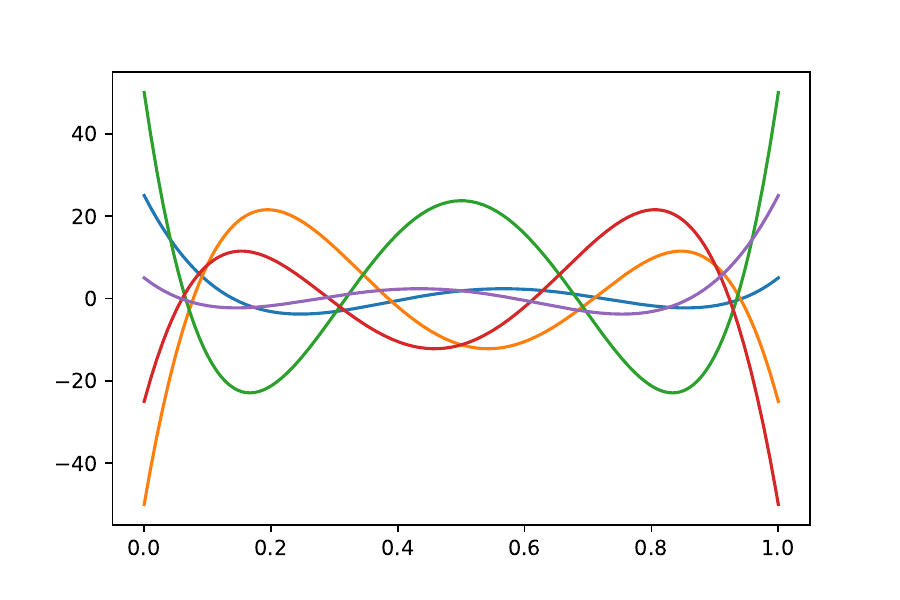}  \]
	
	We write $S$ for the matrix with inner products of the base
	vectors, so:
	\[ \begin{array}{rcccccl}
		S_{i,j}
		& \coloneqq &
		\inprod{r^{i}}{r^{j}}
		& = &
		\displaystyle\int_{r\in [0,1]} r^{i}\cdot r^{j} \intd r
		& = &
		\frac{1}{i+j+1}.
	\end{array} \]
	
	\noindent The dual basis of the binomial polynomials
	$\binomialvector{i}$ are then given by the matrix inverse $(B^{T}\cdot
	S)^{-1}$. It has been studied in a Computer Graphics context
	\cite{dong1993dual} \cite{zhao1988dual}, and formulas for computing
	the dual basis are known, see~\cite{Juttler98} for an overview.
	
	This explicit formulation of the dual basis allows us to describe the
	\emph{bivariate signed hypergeometric} channel
	$\bisgnhypergeometric[L,K] \colon \{0,\ldots,L\} \rightarrow
	\Sgn\big(\{0,\ldots,K\})$, in the following commuting diagram.
	\[ \xymatrix@R-0.8pc{
		\{0,\ldots,L\}\ar[d]_{\cong}\ar[rr]|-{\circ}^-{\bisgnhypergeometric[L,K]} 
		& & \{0,\ldots,K\}
		\\
		\Mlt[L](\finset{2})\ar[rr]|-{\circ}^-{\sgnhypergeometric[K]} & & 
		\Mlt[K](\finset{2})\ar[u]_-{\cong}
	} \]
	
	\noindent We include the parameter $L$ in writing
	$\bisgnhypergeometric[L,K]$ since it cannot be derived from an input
	$j\in\{0,\ldots,L\}$. In contrast, writing this parameter explicitly
	is not needed in the multivariate case, since the size of the urn can
	be computed from the urn itself.
	
	The explicit formula for this bivariate signed hypergeometric
        $\bisgnhypergeometric[L,K]$ is:
\begin{equation}
\label{BivariateEqn}
\begin{array}{rcl}
  \lefteqn{\bisgnhypergeometric[L,K](j)}
  \\
  & \coloneqq &
  \displaystyle \sum_{0\leq i\leq K} 
  \frac{\binom{K}{i}}{(K\shortplus L\shortplus 1) \cdot \binom{L}{j}} 
  \left(\sum_{0\leq \ell \leq L}
  \frac{(-1)^{j+\ell}}{\binom{K+L}{i+\ell}} 
  \sum_{0 \leq k\leq \min(j,\ell)} (2k\shortplus 1) 
  \binom{L\shortplus k\shortplus 1}{L\shortminus j}
  \binom{L \shortminus k}{L\shortminus j} 
  \binom{L\shortplus k\shortplus 1}{L\shortminus \ell} 
  \binom{L\shortminus k}{L\shortminus \ell}
  \right)\bigket{i}
\end{array}
\end{equation}

\noindent This $\bisgnhypergeometric[L,K]$ is defined on $0\leq j\leq
L$, corresponding to urn $j\ket{0} + (L-j)\ket{1}$. The complicated
character of this formula is not helpful for an operational
interpretation in terms of draw probabilities. But it does allow us to
compute (truely) exact distributions.
	
	\begin{example}
		Let's take an urn size $L = 3$ with $2$ balls of colour $0$.  Thus, in
		a multivariate scenario we would write this as urn $\upsilon =
		2\ket{0} + 1\ket{1}$. We first look at a draw of size $K=4$. Using the
		formula from Figure~\ref{BivariateSignedHypergeometricFig} we get a
		bivariate signed hypergeometric distribution of the form:
		\[ \begin{array}{rcl}
			\bisgnhypergeometric[3,4](2)
			& = &
			\frac{17}{210}\bigket{0} -\frac{34}{105}\bigket{1} + 
			\frac{17}{35}\bigket{2} + \frac{106}{105}\bigket{3} - 
			\frac{53}{210}\bigket{4}.
		\end{array} \]
		
		\noindent The number $i$ in $\ket{i}$ refers to the number of balls
		of colour $0$ that are drawn (out of $K$ in total), with corresponding
		(positive or negative) probability.
		
		Similarly, for $K=5$ we have:
		\[ \begin{array}{rcl}
		  \bisgnhypergeometric[3,5](2)
			& = &
			\frac{1}{6}\bigket{0} - \frac{3}{7}\bigket{1} + \frac{1}{21}\bigket{2} + 
			\frac{16}{21}\bigket{3} + \frac{37}{42}\bigket{4} - 
			\frac{3}{7}\bigket{5}.
		\end{array} \]
	\end{example}

\end{document}